\def\today{\number\day\space\ifcase\month\or   January\or February\or
   March\or April\or May\or June\or   July\or August\or September\or
   October\or November\or December\fi\   \number\year}
\theoremstyle{definition}
\newtheorem{thm}{Theorem}[section]
\newtheorem{lem}[thm]{Lemma}
\newtheorem{prp}[thm]{Proposition}
\newtheorem{dfn}[thm]{Definition}
\newtheorem{cor}[thm]{Corollary}
\newtheorem{rmk}[thm]{Remark}
\newcommand{\beq}{\begin{equation}}
\newcommand{\eeq}{\end{equation}}
\newcommand{\beqr}{\begin{eqnarray*}}
\newcommand{\eeqr}{\end{eqnarray*}}
\newcommand{\bal}{\begin{align*}}
\newcommand{\eal}{\end{align*}}
\newcommand{\bei}{\begin{itemize}}
\newcommand{\eei}{\end{itemize}}
\newcommand{\C}{{\mathbb{C}}}
\newcommand{\N}{{\mathbb{N}}}
\newcommand{\ca}{C*-algebra}
\newcommand{\Index}{{\mathrm{Index}}}
\title[The Rohlin property for inclusions of $C^*$-albebras] {The Rohlin property for inclusions of $C^*$-algebras with a finite Watatani index}
\author{Hiroyuki Osaka$^*$}
\thanks{$^*$Research of the first author partially supported by the JSPS grant for Scientific Research No.20540220}
\address{ Department of Mathematical Sciences\\
  Ritsumeikan University\\ Kusatsu, Shiga, 520-2152  Japan}
\email[]{osaka@se.ritsumei.ac.jp}
\author{Kazunori Kodaka}
\address{Department of Mathematical Sciences, Faculty of Science,
Ryukyu University, \\
Nishihara-cho, Okinawa 903-0213, Japan}
\email[]{kodaka@math.u-ryukyu.ac.jp}
\author{Tamotsu Teruya}
\address{Department of Mathematical Sciences\\
  Ritsumeikan University\\ Kusatsu, Shiga, 520-2152  Japan}
\email[]{teruya@se.ritsumei.ac.jp}
\subjclass[2000]{Primary 46L55; Secandary 46L35.}
\begin{document}
\maketitle

\begin{abstract}
We introduce notions of the Rohlin property and  the approximate 
representability
for inclusions of unital $C^*$-algebras.
We investigate a dual relation between the Rohlin property and 
the approximate representability.
We  prove that a number of classes of unital $C^*$-algebras are 
closed under inclusions with
the Rohlin property, including:
\begin{itemize}
 \item AF algebras, AI algebras, AT algebras, and related classes 
characterized by direct limit
 decomposition using semiprojective building blocks.
 \item $C^*$-algebras with  stable rank one.
 \item $C^*$-algebras with real rank zero.
\end{itemize}
\end{abstract}

\section{Introduction}
A.\ Kishimoto \cite{Kishimoto:77}, R.\ Herman and V.\ Jones 
\cite{HJ:82}, \cite{HJ:83}  investigated
a class of finite group actions with
what we presently call the Rohlin property.
After that a number of results for group actions of $C^*$-algebras 
with the Rohlin property
were found in the literature
(see \cite{Kishimoto:98a}, \cite{Kishimoto:98b} , \cite{Nkamura:00}, 
\cite{Izumi:Rohlin1}, \cite{Phillips:tracial}).

In \cite{Izumi:Rohlin1}, M.\ Izumi introduced the Rohlin property
and the approximate representability
for finite group actions.
He proved  that an action of a finite abelian group has the Rohlin property
if and only if its dual action is approximately representable.
We extend the notions of the Rohlin property and  the approximate 
representability for inclusions of unital $C^*$-algebras with
finite Watatani index in the sense of \cite{Watatani:index}.
We investigate a dual relation between the Rohlin property and the 
approximate representability.
We prove that an inclusion  has the Rohlin property if and only if 
its dual inclusion is approximately representable.
It contains that an action of a finite group has the Rohlin property 
if and only if its dual action is approximately representable
as a finite dimensional $C^*$-Hopf algebra action. Note that the dual 
action of an action of a non-commutative finite group is  not an action of some group
though it is an action of some finite dimensional commutative $C^*$-Hopf algebra. 

In \cite{OP:Rohlin}, H.\ Osaka and N.\ C.\ Phillips proved that 
crossed products by finite group actions with the
Rohlin property preserve various properties of $C^*$-algebras.
Since an action of a finite group with the Rohlin property  
is an outer action by \cite[Remark 1.4 and Lemma 1.5]{Phillips:tracial} and 
the crossed product algebra $A \rtimes _\alpha G$ and the fixed point algebra 
$A^{\alpha}$ by an outer action $\alpha$ of a finite group $G$
are Morita equivalent, we  can immediately see that fixed point algebras by 
finite group actions with the Rohlin property also preserve various properties of $C^*$-algebras by 
\cite{OP:Rohlin}. 
We extend their results  and prove that a number of classes of unital $C^*$-algebras are 
closed under inclusions with
the Rohlin property, including:
\begin{itemize}
 \item AF algebras, AI algebras, AT algebras, and related classes 
characterized by direct limit
 decomposition using semiprojective building blocks.
 \item $C^*$-algebras with  stable rank one.
 \item $C^*$-algebras with real rank zero.
\end{itemize}

This paper is organized as follows: In Section \ref{Preliminaries} we 
collect basic facts on
Watatani index theory for \ca s and finite group actions on \ca s with the 
Rohlin property.

In section \ref{Rohlin for E} we introduce the Rohlin property and 
the approximately representability
for conditional expectations and
deduce basic properties of conditional expectations possessing it.
Let $G$ be a finite group, $\alpha$ an action of $G$ on a unital simple 
$C^*$-algebra $A$, and $E$ the canonical conditional expectation from $A$ onto 
the fixed point algebra $A^\alpha$. We prove that 
$\alpha$ has the Rohlin property if and only if $E$ has the Rohlin property.
We prove that if an inclusion has a conditional expectation with the 
Rohlin property, then it is the unique
conditional expectation of its inclusion.
So the property that a conditional expectation has the Rohlin 
property is actually
a property of its inclusion.

In section \ref{Exam}
we construct an inclusion $A \supset P$ with the Rohlin property such that 
$P$ is not the fixed point algebra $A^\alpha$ for any finite group action $\alpha$ with the Rohlin property.

In section \ref{Rohlin for inclusion} we
prove that inclusions  with the
Rohlin property preserve various properties of $C^*$-algebras which 
generalize  results
of \cite{OP:Rohlin}.

\section{Preliminaries}\label{Preliminaries}

In this section we collect notations and basic facts which will be used in this paper.
\subsection{Index theory for \ca s}
\subsubsection{Watatani index for $C^*$-algebras}
We introduce an index in terms of a quasi-basis following Watatani \cite{Watatani:index}.
\begin{dfn}
Let $A \supset P$ be an  inclusion of unital \ca s\ with a conditional expectation $E$ from $A$ onto $P$.
\begin{enumerate}
 \item A {\it quasi-basis} for $E$ is a finite set $\{(u_i, v_i)\}_{i=1}^n \subset A \times A$ such that 
 for every $a \in A$, 
 $$
 a = \sum_{i=1}^nu_iE\left(v_i a\right)= \sum_{i=1}^n E\left(a u_i\right)v_i.
 $$
 \item When $\{(u_i, v_i)\}_{i=1}^n$ is a quasi-basis for $E$, we define $\Index E$ by 
 $$
 \Index E = \sum_{i=1}^n u_iv_i.
 $$
 When there is no quasi-basis, we write $\Index E = \infty$. $\Index E$ is called the 
 Watatani index of $E$.  
\end{enumerate}
\end{dfn}

\begin{rmk}\label{rmk:quasi} We give several remarks about the above definitions.
\begin{enumerate}
 \item $\Index E$ does not depend on the choice of the quasi-basis in the above formula, 
 and it is a central element of $A$ \cite[Proposition 1.2.8]{Watatani:index}.
 \item Once we know that there exists a quasi-basis, we can choose one of the form 
 $\{(w_i, w_i^*)\}_{i=1}^m$, which shows that $\Index E$ is a positive element \cite[Lemma 2.1.6]{Watatani:index}.
 \item By the above statements, if $A$ is a simple $C^*$-algebra, then $\Index E$ is a 
 positive scalar.
 \item Let $\{(u_i, v_i)\}_{i=1}^n$ be a quasi-basis for $E$. 
 If $A$ acts on a Hilbert space $\mathcal{H}$ faithfully, then we can define the map $E^{-1}$
 from $P' \cap B(\mathcal {H})$ to $A' \cap B(\mathcal{H})$ by $E^{-1}(x) = \sum_{i=1}^n u_i x v_i$ for $x$ in $P' \cap B(\mathcal{H})$. 
 In fact, 
 for any $x \in P'\cap B(\mathcal{H})$ and $a \in A$ 
 \begin{eqnarray*}
 E^{-1}(x)a&=& \sum_{i =1}^n u_i x v_i a \\
 &=&\sum_{i, j =1}^n u_i x E(v_i a u_j)v_j \\
 &=&\sum_{i, j =1}^n u_i E(v_i a u_j)x v_j \\
 &=&\sum_{j =1}^n  a u_jx v_j = aE^{-1}(x). \\
\end{eqnarray*}
\item If $\Index E < \infty$, then $E$ is faithful, that is, $E(x^*x) = 0$ implies $x=0$ for $x \in A$.
\end{enumerate}
\end{rmk}

\subsubsection{$C^*$-basic construction}

In this subsection, we recall Watatani's notion of the 
$C^*$-basic construction.

Let $E\colon A\to P$ be a faithful conditional expectation.
Then $A_{P}(=A)$ is
 a pre-Hilbert module over $P$ with a $P$-valued inner
product $$\langle x,y\rangle_P =E(x^{*}y), \ \ x, y \in A_{P}.$$
We denote by ${\mathcal E}_E$ and $\eta_E$ the Hilbert $P$-module completion of $A$ 
by the norm $\Vert x \Vert_P = \Vert \langle x, x \rangle_P\Vert^{\frac{1}{2}}$ for $x$ in $A$
and the natural inclusion map 
from $A$ into ${\mathcal E}_E$.  
Then ${\mathcal E}_E$ is a Hilbert $C^{*}$-module over $P$.
Since $E$ is faithful, the inclusion map $\eta_E$ from  $A$ to ${\mathcal E}_E$ is injective.
Let $L_{P}({\mathcal E}_E)$ be the set of all (right) $P$-module homomorphisms
$T\colon {\mathcal E}_E \to {\mathcal E}_E$ with an adjoint right $P$-module homomorphism
$T^{*}\colon {\mathcal E}_E \to {\mathcal E}_E$ such that $$\langle T\xi,\zeta
\rangle =
\langle \xi,T^{*}\zeta \rangle \ \ \ \xi, \zeta \in {\mathcal E}_E.$$
Then $L_{P}({\mathcal E}_E)$ is a $C^{*}$-algebra with the operator norm
$\|T\|=\sup\{\|T\xi \|:\|\xi \|=1\}.$ There is an injective
$*$-homomorphism $\lambda \colon A\to L_{P}({\mathcal E}_E)$ defined by
$$
\lambda(a)\eta_E(x)=\eta_E(ax)
$$
for $x\in A_{P}$ and  $a\in A$, so that $A$ can
be viewed as a
$C^{*}$-subalgebra of $L_{P}({\mathcal E}_E)$.
Note that the map $e_{P}\colon A_{P}\to A_{P}$
defined by 
$$
e_{P}\eta_E(x)=\eta_E(E(x)),\ \ x\in
A_{P}
$$
 is
bounded and thus it can be extended to a bounded linear operator, denoted
by $e_{P}$ again, on ${\mathcal E}_E$.
Then $e_{P}\in L_{P}({{\mathcal E}_E})$ and $e_{P}=e_{P}^{2}=e_{P}^{*}$; that
is, $e_{P}$ is a projection in $L_{P}({\mathcal E}_E)$.
A projection $e_P$ is called the {\em Jones projection} of $E$.

The {\sl (reduced) $C^{*}$-basic construction} is a $C^{*}$-subalgebra of
$L_{P}({\mathcal E}_E)$ defined to be
$$
C^{*}_r\langle A, e_{P}\rangle = \overline{ span \{\lambda (x)e_{P} \lambda (y) \in
L_{P}({{\mathcal E}_E}): x, \ y \in A \ \} }^{\|\cdot \|} 
$$

\begin{rmk}\label{rmk:b-const}
Watatani proved the following in \cite{Watatani:index}:
\begin{enumerate}
 \item $\Index E$ is finite if and only if $C^{*}_r\langle A, e_{P}\rangle$ has the identity 
 (equivalently $C^{*}_r\langle A, e_{P}\rangle = L_{P}({\mathcal E}_E)$) and there exists a constant 
 $c>0$ such that $E(x^*x) \geq cx^*x$ for $x \in A$, i.e., $\Vert x \Vert_P^2 \geq c\Vert x \Vert^2 $ 
 for $x$ in $A$ by \cite[Proposition 2.1.5]{Watatani:index}.
 Since $\Vert x \Vert \geq \Vert x \Vert_P$ for  $x$ in $A$, if $\Index E$ is finite, then ${\mathcal E}_E = A$.
 \item If $\Index E$ is finite, then each element $z$ in $C^{*}_r\langle A, e_{P}\rangle$ has a form 
 $$
 z = \sum_{i=1}^n \lambda(x_i) e_P \lambda(y_i)
 $$
 for some $x_i$ and $y_i $ in $A$.
 \item Let $C^{*}_{\rm max}\langle A, e_{P}\rangle$ be the unreduce $C^*$-basic construction defined in 
 Definition 2.2.5 of \cite{Watatani:index}, which has the certain universality (cf.(5)).
 If $\Index E$ is finite, then there is an isomorphism from 
 $C^{*}_r\langle A, e_{P}\rangle$ onto $C^{*}_{\rm max}\langle A, e_{P}\rangle$ (\cite[Proposition 2.2.9]{Watatani:index}).
 Therefore we  can identify $C^{*}_r\langle A, e_{P}\rangle$ with $C^{*}_{\rm max}\langle A, e_{P}\rangle$. 
 So we call it the $C^*$-{\it basic construction} and denote it by $C^{*}\langle A, e_{P}\rangle$. 
 Moreover 
 we identify $\lambda(A)$ with $A$ in $C^*\langle A, e_p\rangle (= C^{*}_r\langle A, e_{P}\rangle)$ 
and we denote 
$$
C^*\langle A, e_p\rangle = \{ \sum_{i=1}^n x_i e_P y_i : x_i, y_i \in A, n \in \N\}.
$$
\item If $\Index E$ is finite, then $\Index E$ is a central  invertible  element of $A$ and 
 there is the dual conditional expectation $\hat{E}$ from $C^{*}\langle A, e_{P}\rangle$ onto $A$ such that 
 $$
   \hat{E}(x e_P y) = (\Index E)^{-1}xy \quad \text{for} \ x, y \in A
 $$    
 by \cite[Proposition 2.3.2]{Watatani:index}. Moreover, $\hat{E}$ has a finite index and faithfulness.
\item Suppose that $\Index E$ is finite and $A$ acts on a Hilbert space $\mathcal{H}$ faithfully and $e$ is a projection on 
 $\mathcal{H}$ such that $eae =E(a)e$ for $a \in A$. If a map $P \ni x \mapsto xe \in B(\mathcal{H})$ 
 is injective, then there exists an isomorphism $\pi$ from the norm closure  of a linear span of $AeA$ to  
 $C^{*}\langle A, e_{P}\rangle$ such that $\pi(e) = e_P$ and $\pi(a) = a$ for $a \in A$ \cite[Proposition 2.2.11]{Watatani:index}.
\end{enumerate}
\end{rmk}

The next lemma is very useful.

\begin{lem}\label{lem:pd}
Let $A \supset P$ be an inclusion of unital \ca s\ with a conditional expectation $E$ from $A$ onto $P$.
If $\Index E$ is finite, then 
for each element $z$ in the $C^*$-basic construction $C^*\langle A, e_P \rangle $, there exists an element $a$ in $A$ such that 
$ze_P = ae_P$. In fact, 
$$
  ze_P = (\Index E)\hat{E}(ze_P)e_P.
$$
\end{lem}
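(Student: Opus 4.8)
The plan is to reduce the statement to the single absorption identity $e_P\, y\, e_P = E(y)\, e_P$ for $y \in A$ (under the identification $\lambda(A) = A$ inside $C^*\langle A, e_P\rangle$), which falls out by unwinding the definitions: for $x \in A$ one has
$$
e_P\,\lambda(y)\,e_P\,\eta_E(x) = e_P\,\lambda(y)\,\eta_E(E(x)) = \eta_E\big(E(y E(x))\big) = \eta_E\big(E(y)E(x)\big) = \lambda(E(y))\,e_P\,\eta_E(x),
$$
where the third equality uses that $E$ restricted to $P$ is the identity, so $E(yE(x)) = E(y)E(x)$.

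Given this, I would first invoke Remark~\ref{rmk:b-const}(2): since $\Index E < \infty$, the element $z$ can be written as a \emph{finite} sum $z = \sum_{i=1}^n x_i e_P y_i$ with $x_i, y_i \in A$, so no density or approximation argument is needed. Then
$$
z e_P = \sum_{i=1}^n x_i\, e_P\, y_i\, e_P = \sum_{i=1}^n x_i\, E(y_i)\, e_P = a\, e_P, \qquad a := \sum_{i=1}^n x_i E(y_i),
$$
and $a \in A$ because $E$ takes values in $P \subset A$. This already proves the first assertion.

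To pin down $a$, I would apply the dual conditional expectation $\hat E \colon C^*\langle A, e_P\rangle \to A$ of Remark~\ref{rmk:b-const}(4), for which $\hat E(x e_P y) = (\Index E)^{-1} x y$. Writing $a e_P = a\, e_P\, 1$ gives $\hat E(z e_P) = \hat E(a e_P) = (\Index E)^{-1} a$; since $\Index E$ is a central invertible element of $A$, we may multiply through by $\Index E$ to obtain $a = (\Index E)\,\hat E(z e_P)$, hence $z e_P = (\Index E)\,\hat E(z e_P)\, e_P$. No genuine obstacle arises here; the proof is a direct unwinding of definitions, and the only points that need a moment's care are the absorption identity displayed above and the centrality and invertibility of $\Index E$, both of which are already supplied by Watatani's results recalled in this section.
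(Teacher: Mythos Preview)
Your proof is correct and follows essentially the same route as the paper: write $z = \sum_i x_i e_P y_i$ via Remark~\ref{rmk:b-const}(2), use $e_P y_i e_P = E(y_i) e_P$ to obtain $z e_P = a e_P$ with $a = \sum_i x_i E(y_i)$, and then apply $\hat E$ to identify $a = (\Index E)\,\hat E(z e_P)$. The only difference is that you spell out the absorption identity and the invertibility/centrality of $\Index E$ explicitly, whereas the paper takes these as understood.
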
 
\begin{proof}
For each $z \in C^{*}\langle A, e_{P}\rangle$ there are  elements $x_i, y_i \in A$ such that $z = \sum_{i=1}^nx_i e_P y_i$.
Then $ze_P = \sum_i x_i e_P y_i e_P = \sum_i x_iE(y_i)e_P$, i.e., $ a = \sum_i x_iE(y_i) \in A$.
On the other hand, $\hat{E}(ze_P) = (\Index E)^{-1}\sum_i x_iE(y_i)$ by Remark~\ref{rmk:b-const}~(3) 
and hence we have $a = (\Index E)\hat{E}(xe_P)$.
\end{proof}

\begin{dfn}
	Let $A \supset P$ be a inclusion of unital $C^*$-algebras with a finite index and let $Q$ be 
	a $C^*$-subalgebra of $P$. $Q$ is said to be a {\it tunnel construction} for the inclusion $A\supset P$
	if $A$ is the basic construction for the inclusion $P \supset Q$.
\end{dfn}

In the factor case, a tunnel construction always exists for any index finite subfactor \cite{Jones:index}.
But an inclusion of $C^*$-algebras does not have a tunnel construction in general.
We shall give  a necessary and sufficient condition for 
an existence of a tunnel construction in the next proposition.

\begin{prp}\label{prp:tunnel}
Let $A \supset P$ be an inclusion of unital $C^*$-algebras and $E$ a conditional expectation from $A$ onto $P$ with
$\Index E < \infty $.
If there is a  projection $e \in A$ such that $E(e) = (\Index E)^{-1}$, 
then we have 
$$
ePe = Qe, \quad Q = P \cap \{e\}'
$$
In particular, if $e$ is a full projection, i.e., there are elements $x_i, y_i$ of $A$ such that 
$\sum_{i=1}^n x_i e y_i = 1$,  then 
$Q$ is a tunnel construction for $A \supset P$ such that $e$ is the Jones projection for $P \supset Q$.	
\end{prp}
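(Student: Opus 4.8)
The plan is to prove the identity $ePe=Qe$ first, by a computation inside the $C^{*}$-basic construction of $P\subseteq A$, then to read off a conditional expectation $E_{Q}\colon P\to Q$, and finally, assuming $e$ full, to recognise $A$ as the $C^{*}$-basic construction of $P\supseteq Q$ using Remark~\ref{rmk:b-const}(5).

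Write $\mu=\Index E$; by Remark~\ref{rmk:b-const}(4) it is a positive, central, invertible element of $A$, and $E(e)=\mu^{-1}$. I would first note that $y\mapsto ye$ and $y\mapsto ey$ are injective on $P$ (if $ye=0$ then $0=E(ye)=y\mu^{-1}$, and similarly for $ey$), and that $Qe\subseteq ePe$ since $eqe=qe$ for $q\in Q=P\cap\{e\}'$. The crux is the reverse inclusion $ePe\subseteq Pe$. For this I pass to $A_{1}=C^{*}\langle A,e_{P}\rangle$, with faithful dual expectation $\hat E\colon A_{1}\to A$ (Remark~\ref{rmk:b-const}(4)) and Jones projection $e_{P}$, so $e_{P}ee_{P}=E(e)e_{P}=\mu^{-1}e_{P}$. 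Put $g:=\mu\,ee_{P}e$. Then $g^{*}=g$, $g^{2}=\mu^{2}e(e_{P}ee_{P})e=\mu\,ee_{P}e=g$, and $eg=ge=g$, so $g$ is a projection with $g\le e$; also $\hat E(g)=\mu\cdot\mu^{-1}e=e=\hat E(e)$, so $\hat E(e-g)=0$ with $e-g\ge0$, and faithfulness of $\hat E$ forces $g=e$. Now viewing $A_{1}=L_{P}(\mathcal E_{E})$ with $\mathcal E_{E}=A$: $\lambda(e)$ is the projection onto the submodule $eA$, while $g$ is the projection onto $eP$ (which is closed, since $\langle ep,ep'\rangle_{P}=\mu^{-1}p^{*}p'$), so $g=\lambda(e)$ gives $eA=eP$ (and, by adjoints, $Ae=Pe$). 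Consequently, for $x\in P$, $exe=e(xe)\in eA=eP$, so $exe=ep$ for some $p\in P$; applying $E$ shows $p=\mu E(exe)$, i.e. $exe=\mu\,eE(exe)$, and the same argument with $x^{*}$ in place of $x$, followed by taking adjoints, gives $exe=\mu E(exe)\,e$ with $\mu E(exe)\in P$. I expect this identity — equivalently $g=e$, where the hypothesis $E(e)=\mu^{-1}$ is used essentially — to be the main obstacle; the remaining steps are routine.

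Granting the identity, I set $E_{Q}(x):=\mu E(exe)\in P$, so $exe=E_{Q}(x)e$; taking the adjoint of $E_{Q}(x^{*})e=ex^{*}e$ shows $eE_{Q}(x)=E_{Q}(x)e$, hence $E_{Q}(x)\in Q$. One checks easily that $E_{Q}\colon P\to Q$ is positive and unital, restricts to the identity on $Q$ ($E_{Q}(q)=\mu q\mu^{-1}=q$), and is $Q$-bimodular ($E_{Q}(q_{1}xq_{2})e=q_{1}(exe)q_{2}$), so it is a conditional expectation; since $E_{Q}|_{Q}=\mathrm{id}$ we get $E_{Q}(P)=Q$ and therefore $ePe=E_{Q}(P)e=Qe$.

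Finally assume $e$ is full, $1=\sum_{j}x_{j}ey_{j}$. Using $Ae=Pe$ and $eA=eP$, each $x_{j}ey_{j}$ can be rewritten as $p_{j}eq_{j}$ with $p_{j},q_{j}\in P$, so $1=\sum_{j}p_{j}eq_{j}$; this immediately yields $A=\operatorname{span}PeP$, and one checks that $\{(p_{j},q_{j})\}_{j}$ is a quasi-basis for $E_{Q}$, so $\Index E_{Q}<\infty$. Representing $A$, hence $P$, faithfully on a Hilbert space and noting $exe=E_{Q}(x)e$ for $x\in P$ together with the injectivity of $y\mapsto ye$ on $Q$, Remark~\ref{rmk:b-const}(5) applied to $P\supseteq Q$ produces an isomorphism of $\operatorname{span}PeP$ onto $C^{*}\langle P,e_{Q}\rangle$ that fixes $P$ and sends $e$ to $e_{Q}$. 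As $\operatorname{span}PeP=A$, this identifies $A$ with the $C^{*}$-basic construction of $P\supseteq Q$ with Jones projection $e$, i.e. $Q$ is a tunnel construction for $A\supseteq P$.
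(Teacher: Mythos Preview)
Your argument is correct and follows essentially the same route as the paper: both proofs pass to the basic construction, use $e_{P}ee_{P}=\mu^{-1}e_{P}$ and faithfulness of $\hat E$ to obtain $ee_{P}e=\mu^{-1}e$, define the expectation $x\mapsto \mu E(exe)$ onto $Q$, show $Ae=Pe$, and then invoke Remark~\ref{rmk:b-const}(5). The only cosmetic difference is that you read off $eA=eP$ from the module picture (identifying $g=\mu ee_{P}e$ as the projection onto $eP$), whereas the paper obtains both $exe=F(x)e$ and $Ae=Pe$ by short direct computations with $\hat E$; your explicit quasi-basis for $E_{Q}$ is a nice addition that the paper leaves implicit.
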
 
\begin{proof}
	Let $e_P$ be the Jones projection for $E$. Then $e_Pee_P = E(e)e_p = {(\Index E)}^{-1}e_P$.
	We shall prove that $ee_Pe = (\Index E)^{-1}e$. Put $f = (\Index E) ee_Pe$. Then it is easy to see that $f$ is a projection and 
	$f \leq e$. Let $\hat{E}$ be the dual conditional expectation of $E$. Then $\hat{E}(e -f) = 0$. And hence 
	we have $f=e$ by the faithfulness of $\hat{E}$ and $ee_Pe = (\Index E)^{-1}e$. 
	
	Let $F$ be a linear map on $P$ defined by $F(x) = (\Index E) E(exe)$. 
	We shall prove $exe = F(x)e = eF(x)$ for $x \in P$. Since $ee_Pe = (\Index E)^{-1} e$, we have for $x \in P$ 
	\begin{eqnarray*}
	eF(x)e_P &=& e(\Index E) E(exe)e_P \\
	&=& e(\Index E)  e_P(exe)e_P \\
        &=& (\Index E)  (ee_Pe)xee_P = exee_P 
	\end{eqnarray*}
	and hence $eF(x)e_P = exe e_P$. 
	Then using Remark~\ref{rmk:b-const}~(3) we have
        \begin{eqnarray*}
        eF(x) 
        &=& (\Index E) \hat{E}(eF(x)e_P) \\
        &=& (\Index E)\hat{E}(exee_P) = exe.
        \end{eqnarray*}
	Moreover $F(x)e = (eF(x^*))^* = (ex^*e)^* = exe$, and hence 
	$exe = F(x)e = eF(x)$.
	
	Let $Q$ be the $C^*$-subalgebra of $P$ defined by $Q = P \cap \{e\}'$.
	We saw $F(x) \in Q$ for any $x \in P$. Conversely if $x$ is an element of $Q$,  then 
	$F(x) = (\Index E) E(exe) = (\Index E) E(e)x = x$. Therefore 
        $F$ is a conditional expectation from $P$ onto $Q$ and  $ePe = Qe$.
	If $xe =0$ for some $x \in Q$, then $x = (\Index E)E(e)x = (\Index E)E(xe)=0$ and hence a map $Q \ni x \mapsto xe \in Qe$ 
        is injective. 
        By Remark \ref{rmk:b-const}~(5), 
        the norm closure of the linear span $\{ xey : x, y \in P\}$ is the basic construction for $P \supset Q$. 
	For any $a \in A$ 
	\begin{eqnarray*}
	ae &=& (\Index E)\hat{E}(e_P ae ) \\
        &=& (\Index E)^2\hat{E}(e_P a e e_P e) \\
        &=& (\Index E)^2\hat{E}(E(ae)e_Pe) = (\Index E) E(ae)e, 
	\end{eqnarray*}
	and hence $Ae = Pe$. Similarly, we have $eA = eP$. If $e$ is a full projection, then  
	\begin{eqnarray*}
        A &=&  \text{the linear span of }\{xe y : x, y \in A\} \\
        &=& \text{the linear span of }\{x e y : x, y \in P\}.
        \end{eqnarray*} 
        So $A$ is the  basic construction for the inclusion $P \supset Q$ with the Jones projection $e$. 
        It means that $Q$ is a tunnel construction for $A \supset P$.
	\end{proof}

\subsection{Finite group actions on $C^*$-algebras with the Rohlin property}

For a $C^*$-algebra $A$, we set 
\begin{eqnarray*}
c_0(A) &=& \{(a_n) \in l^\infty(\N, A): \lim\limits_{n \to \infty} \Vert a_n \Vert= 0\}  \\
 A^\infty &=&l^\infty(\N, A)/c_0(A).  
\end{eqnarray*}
We identify $A$ with the $C^*$-subalgebra of $A^\infty$ consisting of the equivalence classes of constant sequences and 
set
$$
A_\infty = A^\infty \cap A'.
$$
For an automorphism $\alpha \in {\rm Aut}(A)$, we denote by $\alpha^\infty$ and $\alpha_\infty$ the automorphisms of 
$A^\infty$ and $A_\infty$ induced by $\alpha$, respectively.

Izumi defined the Rohlin property for a finite group action in \cite[Definition 3.1]{Izumi:Rohlin1} as follows:

\begin{dfn}\label{def:group action}
Let $\alpha$ be an action of a finite group $G$ on a unital $C^*$-algebra $A$. 
$\alpha$ is said to have the {\it Rohlin property} if there exists a partition of unity 
$\{e_g\}_{g \in G} \subset A_\infty$ consisting of projections satisfying 
$$
(\alpha_g)_\infty(e_h) = e_{gh} \quad \text{for} \  g, h \in G.
$$
We call $\{e_g\}_{g\in G}$  Rohlin projections. 
\end{dfn}  

The next lemma is essentially contained in \cite[Lemma 1.5]{Phillips:tracial}. 
But we give a short proof of it for the self-contained.

\begin{lem}
Let $\alpha$ be an action of a finite group $G$ on a unital $C^*$-algebra $A$. 
If $\alpha$ has the Rohlin property, then $\alpha$ is an outer action.
\end{lem}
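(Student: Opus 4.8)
The plan is to prove the contrapositive-flavored statement directly: assume $\alpha$ has the Rohlin property with Rohlin projections $\{e_g\}_{g\in G}\subset A_\infty$, and show that no $\alpha_g$ for $g\neq 1$ can be inner. Recall that $\alpha_g$ being inner means there is a unitary $u\in A$ with $\alpha_g(a)=uau^*$ for all $a\in A$. The key observation is that elements of $A_\infty$ commute with $A$ by definition ($A_\infty=A^\infty\cap A'$), so if $\alpha_g=\mathrm{Ad}(u)$ with $u\in A$, then the induced automorphism $(\alpha_g)_\infty$ on $A_\infty$ is also implemented by $u$, i.e.\ $(\alpha_g)_\infty(x)=uxu^*$ for $x\in A_\infty$. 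But $u\in A$ commutes with everything in $A_\infty$, so $(\alpha_g)_\infty$ is the \emph{identity} on $A_\infty$.

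First I would fix $g\in G$ with $g\neq 1$ and suppose for contradiction that $\alpha_g=\mathrm{Ad}(u)$ for some unitary $u\in A$. Then by the remark above, $(\alpha_g)_\infty$ acts trivially on $A_\infty$, so in particular $(\alpha_g)_\infty(e_1)=e_1$. On the other hand, the Rohlin relation gives $(\alpha_g)_\infty(e_1)=e_{g\cdot 1}=e_g$. Hence $e_g=e_1$. Now I would iterate: applying the Rohlin relation to $(\alpha_g)_\infty$ repeatedly (using that $(\alpha_g)_\infty^k=(\alpha_{g^k})_\infty$), one gets $e_{g^k}=e_1$ for all $k$. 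Since $G$ is finite, the cyclic subgroup $\langle g\rangle$ has order $m\geq 2$, so the projections $e_1,e_g,e_{g^2},\dots,e_{g^{m-1}}$ are all equal to $e_1$. But $\{e_h\}_{h\in G}$ is a partition of unity consisting of \emph{mutually orthogonal} projections (they sum to $1$ and each is a projection, so $e_h e_{h'}=0$ for $h\neq h'$; this orthogonality is the standard consequence of being a partition of unity by projections). In particular $e_1$ and $e_g$ are orthogonal, so $e_1 e_g=0$. Combined with $e_1=e_g$ this yields $e_1=e_1^2=e_1 e_g=0$, hence all $e_h=0$, contradicting $\sum_h e_h=1\neq 0$ in $A_\infty$ (note $A_\infty$ is unital since $A$ is).

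The main subtlety — though it is not really an obstacle — is making sure the passage from ``$\alpha_g$ inner on $A$'' to ``$(\alpha_g)_\infty$ trivial on $A_\infty$'' is airtight. Concretely: $\alpha^\infty$ is defined on $A^\infty$ by $\alpha^\infty([(a_n)_n])=[(\alpha(a_n))_n]$, and if $\alpha=\mathrm{Ad}(u)$ then $\alpha^\infty([(a_n)_n])=[(ua_nu^*)_n]=u[(a_n)_n]u^*$ where $u$ is viewed as the constant sequence in $A^\infty$; this restricts to $A_\infty=A^\infty\cap A'$, and since the constant sequence $u$ lies in $A\subset A^\infty$, any $x\in A_\infty$ commutes with $u$, giving $(\alpha_g)_\infty(x)=uxu^*=x$. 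That is the whole point, and everything else is the elementary arithmetic of orthogonal projections summing to one. Actually one only needs a single application: $e_1=(\alpha_g)_\infty(e_1)=e_g$ already contradicts $e_1e_g=0$ together with $e_1\neq 0$, so the iteration is not even necessary. I would present the argument in that streamlined form.
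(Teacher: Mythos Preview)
Your argument is correct and follows essentially the same approach as the paper: if $\alpha_g=\mathrm{Ad}(u)$ with $u\in A$, then $(\alpha_g)_\infty$ acts trivially on $A_\infty=A'\cap A^\infty$, forcing $e_h=e_{gh}$ and contradicting the Rohlin relations. The one missing line in your streamlined version is the justification that $e_1\neq 0$ (equivalently, that ``hence all $e_h=0$'' really follows): this holds because $e_1=0$ would give $e_h=(\alpha_h)_\infty(e_1)=0$ for every $h\in G$, contradicting $\sum_h e_h=1$.
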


\begin{proof}
Suppose that $g$ is not the unit element of $G$.
If $\alpha_g$ is an inner automorphism ${\rm Ad} u$ for some unitary element $u$ in $A$, 
then $(\alpha_g)_\infty(e_h) = u e_h u^* = e_h$ for $h$ in $G$ since $e_h \in A_\infty = A' \cap A^\infty$.
Hence if $\alpha_g$ has the Rohlin property, then $\alpha_g$ is outer.
\end{proof}

Let $A \supset P$ be an inclusion of unital $C^*$-algebras.
For a conditional expectation $E$ from  $A$ onto $P$, we denote by $E^\infty$, the natural
conditional expectation from $A^\infty $ onto $ P^\infty$ induced by $E$.
If $E$ has a finite index with a quasi-basis $\{(u_i, v_i)\}_{i=1}^n$, then 
$E^\infty$ also has a finite index with a quasi-basis $\{(u_i, v_i)\}_{i=1}^n$
and $\Index (E^\infty) = \Index E$.

\begin{prp}\label{prp:group}
Let $\alpha$ be an action of a finite group $G$ on a unital $C^*$-algebra $A$ and 
$E$  the canonical conditional expectation from $A$ onto the fixed point algebra $P = A^{\alpha}$ defined by 
$$
E(x) = \frac{1}{\#G}\sum_{g\in G}\alpha_g(x) \quad \text{for} \ x \in A, 
$$
where $\#G$ is the order of $G$.
Then $\alpha$ has the Rohlin property if and only if there is a projection $e \in A_\infty$ such that 
$E^\infty (e) = \frac{1}{\#G}\cdot 1$, where $E^\infty$ is the conditional expectation from $A^\infty$ onto 
$P^\infty$ induced by $E$.
\end{prp}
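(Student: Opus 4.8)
The plan is to set up an explicit correspondence between Rohlin projections for $\alpha$ and projections $e\in A_\infty$ with $E^\infty(e)=\frac{1}{\#G}\cdot 1$. A preliminary observation drives everything: since $E(x)=\frac{1}{\#G}\sum_{g\in G}\alpha_g(x)$, the induced expectation is the coordinatewise average $E^\infty=\frac{1}{\#G}\sum_{g\in G}\alpha_g^\infty$ on $A^\infty$; moreover each $\alpha_g^\infty$ carries $A_\infty=A^\infty\cap A'$ onto itself (if $e$ commutes with $A$ then so does $\alpha_g^\infty(e)$, by replacing $a$ with $\alpha_g^{-1}(a)$), and there it agrees with $(\alpha_g)_\infty$. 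Hence $E^\infty|_{A_\infty}=\frac{1}{\#G}\sum_{g\in G}(\alpha_g)_\infty$.

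For the direction ``Rohlin property $\Rightarrow$ existence of $e$'', I take Rohlin projections $\{e_g\}_{g\in G}\subset A_\infty$ and put $e:=e_1$, where $1$ denotes the unit of $G$. Using the covariance relation and the partition-of-unity relation, $E^\infty(e)=\frac{1}{\#G}\sum_{g\in G}(\alpha_g)_\infty(e_1)=\frac{1}{\#G}\sum_{g\in G}e_g=\frac{1}{\#G}\cdot 1$, as required.

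For the converse, given a projection $e\in A_\infty$ with $E^\infty(e)=\frac{1}{\#G}\cdot 1$, I define $e_g:=(\alpha_g)_\infty(e)$ for $g\in G$. Each $e_g$ is a projection in $A_\infty$; the covariance relation $(\alpha_g)_\infty(e_h)=(\alpha_{gh})_\infty(e)=e_{gh}$ holds automatically because $\alpha$ is a group action; and $\sum_{g\in G}e_g=\#G\cdot E^\infty(e)=1$ by hypothesis. The one step that needs an actual argument is mutual orthogonality of the $e_g$, and for this I would invoke the elementary fact that projections $p_1,\dots,p_n$ in a $C^*$-algebra with $\sum_i p_i=1$ are automatically mutually orthogonal: fixing $j$ and compressing the identity $\sum_{i\neq j}p_i=1-p_j$ by $p_j$ on both sides gives $\sum_{i\neq j}(p_ip_j)^*(p_ip_j)=p_j(1-p_j)p_j=0$, whence $p_ip_j=0$ for all $i\neq j$. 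Applying this with $\{p_i\}=\{e_g\}_{g\in G}$ shows $\{e_g\}_{g\in G}$ is a partition of unity, i.e.\ $\alpha$ has the Rohlin property.

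The \emph{only} genuine obstacle is this last orthogonality point: a finite family of projections summing to the identity is not, on its face, a family of mutually orthogonal projections, and one needs the compression trick (or an equivalent argument) to deduce it. Everything else in the proof is just unwinding the definitions of $A_\infty$, $A^\infty$, $(\alpha_g)_\infty$ and $E^\infty$, and in particular no simplicity hypothesis on $A$ is used here.
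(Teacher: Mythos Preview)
Your proof is correct and follows essentially the same approach as the paper: set $e=e_1$ for the forward direction, and for the converse define $e_g=(\alpha_g)_\infty(e)$ and use $\sum_g e_g=\#G\cdot E^\infty(e)=1$. The only difference is that you spell out the orthogonality of projections summing to the identity, which the paper leaves implicit in the phrase ``partition of unity.''
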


\begin{proof}
Suppose that $\alpha$ has the Rohlin property with a partition of unity 
$\{e_g\}_{g \in G} \subset A_\infty$ consisting of projections satisfying 
$$
(\alpha_g)_\infty(e_h) = e_{gh} \quad \text{for} \  g, h \in G.
$$
Then 
$$
E^\infty(e_1) = \frac{1}{\#G}\sum_{g\in G}(\alpha_g)_\infty(e_1) = \frac{1}{\#G}\sum_{g\in G}e_g = \frac{1}{\#G}\cdot 1, 
$$
where $e_1$ is the projection in the partition of unity $\{e_g\}_{g\in G}$ which corresponds to the unit element of $G$.

Conversely, suppose that there is a projection $e \in A_\infty$ such that $E^\infty (e) = \frac{1}{\#G}\cdot 1$.
Define $e_g = (\alpha_g)_\infty(e)\in A_\infty$ for $g \in G$.
Then 
$$
\sum_{g\in G}e_g = \# G E(e) =1,
$$
i.e., $\{e_g\}_{g \in G} \subset A_\infty$ is a partition of unity. It is obvious that 
$(\alpha_g)_\infty(e_h) = e_{gh}$ for $g, h \in G$.
Hence $\alpha$ has the Rohlin property.
\end{proof}

%%%%%%%%%%%%%%%%%%%%%%%%%%%%%%%%%%%%%%%%%%%%%%%%%%%%%%%%%%%%%%%%%%%%%%%%%%%%%%%%%%%%%%%%%%%%%%%%%%%%%%%%%%%%%%%%%%%%%%%%%%%%%%%%

\section{Conditional expectations of unital $C^*$-algebras with the Rohlin property}\label{Rohlin for E}

\begin{dfn}
A conditional expectation $E$ of a unital  $C^*$-algebra $A$ with a finite index is said to have the {\it Rohlin property} 
if there exists a  projection $e \in A_\infty$ satisfying 
$$
E^\infty(e) = ({\Index}E)^{-1} \cdot 1
$$
and a map $A \ni x \mapsto xe$ is injective. We call $e$ a Rohlin projection.
\end{dfn}

\begin{prp}
Let $G$ be a finite group, $\alpha$  an action of $G$ on  a unital simple $C^*$-algebra $A$, and
 $E$  the canonical conditional expectation from $A$ onto the fixed point algebra $A^{\alpha}$.
Then $\alpha$ has the Rohlin property if and only if $E$ has the Rohlin property.
\end{prp}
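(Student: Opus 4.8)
The plan is to go through \PP{prp:group} in both directions, using the fact that for the canonical conditional expectation $E$ onto $A^\alpha$ one has $\Index E = \#G$, so that the condition $E^\infty(e) = (\Index E)^{-1}\cdot 1$ appearing in the definition of the Rohlin property for $E$ is exactly the condition $E^\infty(e) = \frac{1}{\#G}\cdot 1$ appearing in \PP{prp:group}. Thus the only genuine difference between "$\alpha$ has the Rohlin property" and "$E$ has the Rohlin property" is the extra requirement in the latter that the map $A \ni x \mapsto xe$ be injective. So the content of the proof is: produce a Rohlin projection for $E$ from a Rohlin projection for $\alpha$ (and vice versa), and along the way verify the injectivity clause.

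First I would assume $\alpha$ has the Rohlin property, with Rohlin projections $\{e_g\}_{g\in G}\subset A_\infty$. By \PP{prp:group}, $e := e_1$ satisfies $E^\infty(e) = \frac{1}{\#G}\cdot 1 = (\Index E)^{-1}\cdot 1$. It remains to check that $x\mapsto xe$ is injective on $A$. Suppose $xe = 0$ for $x\in A$. Applying $E^\infty$ and using that $E^\infty$ restricted to the copy of $A$ is $E$ while $e\in A_\infty$ commutes with $A$, I would compute $0 = E^\infty(xe) = xE^\infty(e) = x\cdot(\Index E)^{-1}\cdot 1$, hence $x = 0$ (here $\Index E = \#G$ is an invertible scalar since $A$ is simple, by \LL{rmk:quasi}~(3)). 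Wait — one must be slightly careful that $E^\infty(xe) = xE^\infty(e)$: this is the $P^\infty$-bimodule property of $E^\infty$ together with $x\in A\subset A^\infty$ and $e$ commuting with $x$; more precisely $E^\infty$ is $P^\infty$-linear, but $x$ need not lie in $P^\infty$, so instead I use $E^\infty(xe) = E^\infty(ex) $ when $x$ commutes with $e$ — actually the cleanest route is: $x^*xe = x^*(xe) = 0$, so $E^\infty(x^*xe)=0$; since $e$ commutes with $x^*x\in A$, $E^\infty(x^*xe) = ?$ still has the same issue. The robust argument is to note $0 = e x^* x e = x^* x e$ (as $e$ is a projection commuting with $x^*x$... no). Let me instead just say: from $xe=0$ we get $ex^*xe = 0$, i.e. $x^*xe = 0$ since $e$ commutes with $x$ and is a projection; then $\|E(x^*x)\|\,\|e\|$-type estimates won't directly help, so the correct general fact I will invoke is simply that $E^\infty$ is faithful (it has finite index, by \LL{rmk:quasi}~(5) applied to $E^\infty$) and compute $E^\infty\big((xe)^*(xe)\big) = E^\infty(e x^* x e) = E^\infty(x^* x e) = E(x^* x)\cdot(\Index E)^{-1}$, using that $e\in A_\infty$ commutes with $x^*x\in A$ and is a projection with $E^\infty(e) = (\Index E)^{-1}\cdot 1$ central; faithfulness of $E$ then forces $x=0$.

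For the converse, assume $E$ has the Rohlin property with Rohlin projection $e\in A_\infty$, so $E^\infty(e) = (\Index E)^{-1}\cdot 1 = \frac{1}{\#G}\cdot 1$. Then \PP{prp:group} immediately gives that $\alpha$ has the Rohlin property (the injectivity clause is not even needed for this direction, and the projections $\{(\alpha_g)_\infty(e)\}_{g\in G}$ are the Rohlin projections, as in the proof of \PP{prp:group}). I expect the main obstacle, such as it is, to be the bookkeeping in the injectivity verification — making sure the bimodule/commutation manipulations with $E^\infty$, $A$, and $A_\infty$ are done correctly and that one legitimately has $\Index(E^\infty) = \Index E = \#G$ a positive invertible scalar (which is recorded in the excerpt just before \PP{prp:group}, together with \LL{rmk:quasi}~(3)). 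Everything else is a direct application of \PP{prp:group}.
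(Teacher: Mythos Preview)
Your overall plan—reduce everything to \PP{prp:group} and isolate the injectivity clause as the only extra content—matches the paper's, and your converse direction is fine. There are, however, two genuine gaps in the forward direction.

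The injectivity argument does not work. The step $E^\infty(x^*xe) = E(x^*x)\cdot(\Index E)^{-1}$ is unjustified: $E^\infty$ is only $P^\infty$-bilinear, and the fact that $e$ commutes with $x^*x$ does not let you factor $x^*x \in A$ out of $E^\infty$. Explicitly, $E^\infty(x^*xe) = \frac{1}{\#G}\sum_{g} \alpha_g(x^*x)\,(\alpha_g)_\infty(e)$, which has no reason to collapse to $E(x^*x)E^\infty(e)$. You flagged this danger yourself a few lines earlier, but the workaround you settle on repeats exactly the same illegitimate move with $x^*x$ in place of $x$. The paper instead uses the simplicity of $A$ directly: since $e \in A_\infty$ commutes with every element of $A$, the set $\{a \in A : ae = 0\}$ is a closed two-sided ideal of $A$; it is proper because $e \ne 0$, and hence it is $\{0\}$. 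Secondly, you assert $\Index E = \#G$ without proof. For the canonical expectation onto $A^\alpha$ this is not automatic—indeed $\Index E$ need not even be finite unless the action is saturated. The paper handles this by citing \cite[Theorem~4.1 and Remark~4.6]{Jeong:saturated}: the Rohlin property forces $\alpha$ to be saturated, which is what guarantees $\Index E < \infty$ and makes the constant $(\Index E)^{-1}$ in the definition of the Rohlin property for $E$ match the $\tfrac{1}{\#G}$ appearing in \PP{prp:group}.
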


\begin{proof}
Suppose that $\alpha$ has the Rohlin property. By \cite[Theorem 4.1 and Remark 4.6]{Jeong:saturated}, $\alpha$ is 
saturated and $\Index E$ is finite.
The simplicity of $A$ implies that the map $A \ni x \mapsto xe$ is injective. 
So we have that $E$ has the Rohlin property by Proposition \ref{prp:group}.
Conversely, if $E$ has the Rohlin property, then $\alpha$ has the Rohlin property by Proposition \ref{prp:group}.
\end{proof}

\begin{dfn}
A conditional expectation $E$ from  a unital $C^*$-algebra $A$ onto $P$ with a finite index 
is said to be {\it approximately representable} 
if there exists a projection $e \in P_\infty$ satisfying for any $x \in A$
$$
exe = E(x)e 
$$
and a map $P \ni x \mapsto xe $ is injective. 
\end{dfn}

\begin{prp}\label{prp:equality}
Let $A \supset P$ be an inclusion of unital $C^*$-algebras 
and $E$ a conditional expectation from 
$A$ onto $P$ with a finite index.   Let $B$ be the basic construction for $A \supset P$ 
and $\hat{E}$ the dual conditional expectation of $E$ from $B$ onto $A$.
Then 
\begin{enumerate}
 \item $E$ has the Rohlin property if and only if $\hat{E}$ is 
 approximately representable;
 \item $E$ is approximately representable if and only if 
 $\hat{E}$ has the Rohlin property.
\end{enumerate}
\end{prp}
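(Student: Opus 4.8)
The plan is to prove all four implications by the single principle that one and the same projection witnesses both properties, the two defining conditions being reformulations of one Jones-type relation. The key observation is that for a projection $e\in A_\infty$ the Rohlin condition $E^\infty(e)=(\Index E)^{-1}1$ is equivalent to $e\,e_P\,e=(\Index E)^{-1}e$, and that the injectivity requirement in the definitions of ``Rohlin'' and of ``approximately representable'' is literally the same; the same holds one level up the tower $A\subset B\subset C$, with $B=C^*\langle A,e_P\rangle$ and $C=C^*\langle B,e_A\rangle$. Two tools drive everything: (i) the computation in the first paragraph of the proof of \PP{prp:tunnel}, transplanted into the sequence algebras, which rests on $e_P a e_P=E^\infty(a)e_P$ for $a\in A^\infty$, on $\hat E^\infty(x e_P y)=(\Index E)^{-1}xy$, and on the faithfulness of the finite-index expectations $\hat E^\infty$ and $\hat{\hat E}$; and (ii) the uniqueness of the basic construction, Remark~\ref{rmk:b-const}(5), applied with a faithful representation of $B^\infty$ (resp.\ $C^\infty$) as ambient space, which realizes the (double) basic construction inside the sequence algebra.

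For (1), suppose first $E$ has the Rohlin property, with Rohlin projection $e\in A_\infty$. I claim this same $e$ witnesses approximate representability of $\hat E$. Since $B$ is the closed linear span of the elements $x_1 e_P x_2$ ($x_i\in A$), since $e$ commutes with $A$, and since $\hat E(x_1 e_P x_2)=(\Index E)^{-1}x_1 x_2$, it suffices to prove $e\,e_P\,e=(\Index E)^{-1}e$ (the injectivity condition being common to both definitions). Put $g=(\Index E)\,e\,e_P\,e\in B^\infty$; from $E^\infty(e)=(\Index E)^{-1}1$ one gets $e_P\,e\,e_P=(\Index E)^{-1}e_P$, hence $g$ is a projection with $g\le e$ and $ege=g$; since $\hat E^\infty$ is an $A^\infty$-bimodule map with $\hat E^\infty(e_P)=(\Index E)^{-1}1$ it gives $\hat E^\infty(e-g)=0$, and faithfulness of $\hat E^\infty$ forces $g=e$. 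This is exactly the ``at infinity'' version of the opening of the proof of \PP{prp:tunnel}.

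For the converse direction of (1), let $e\in A_\infty$ witness approximate representability of $\hat E$: $eye=\hat E(y)e$ for all $y\in B$, with $x\mapsto xe$ injective on $A$. Taking $y=e_P$ gives $e\,e_P\,e=(\Index E)^{-1}e$, and I must upgrade this to $E^\infty(e)=(\Index E)^{-1}1$. First, in $C=C^*\langle B,e_A\rangle$ one proves the ``second'' Jones relation $e_P e_A e_P=(\Index E)^{-1}e_P$ by the same faithfulness trick: $e_A e_P e_A=\hat E(e_P)e_A=(\Index E)^{-1}e_A$, so $q:=(\Index E)e_P e_A e_P$ is a projection with $q\le e_P$ whose image under $\hat{\hat E}$ (using $\hat{\hat E}(e_A)=(\Index E)^{-1}1$, as $\Index\hat E=\Index E$) equals $e_P=\hat{\hat E}(e_P)$, whence $q=e_P$ by faithfulness of $\hat{\hat E}$. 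Next, representing $B^\infty$ faithfully on a Hilbert space, Remark~\ref{rmk:b-const}(5) applied to $B\supset A$ with $\hat E$ yields an isomorphism $\pi$ of the closed linear span of $\{b_1 e b_2:b_i\in B\}$ onto $C$ with $\pi|_B=\id$ and $\pi(e)=e_A$. Transporting the second Jones relation through $\pi^{-1}$ gives $e_P\,e\,e_P=(\Index E)^{-1}e_P$ in $B^\infty$; since $e_P\,e\,e_P=E^\infty(e)e_P$ and $P^\infty\ni q\mapsto qe_P$ is injective ($\hat E^\infty(qe_P)=(\Index E)^{-1}q$), we conclude $E^\infty(e)=(\Index E)^{-1}1$.

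Part (2) is the dual of (1): I would run exactly the same scheme on the tower $A\subset B\subset C$, with $\hat E\colon B\to A$ and $\hat{\hat E}\colon C\to B$ in place of $E$ and $\hat E$, rephrasing ``$\hat E$ has the Rohlin property'' and ``$\hat E$ is approximately representable'' as the single Jones relation (now for a projection in $B_\infty$, resp.\ in $A_\infty$, acting on $C^\infty$), and then using the standard fact that $E$ is recovered from $\hat{\hat E}$ by compression by the full projection $e_P\in C$ to pass between approximate representability of $\hat{\hat E}$ and that of $E$. The step I expect to be the real work is not the algebra --- each identity reduces to one of the two faithfulness computations above --- but the bookkeeping of the sequence algebras $P_\infty$, $A_\infty$, $B_\infty$ sitting inside $A^\infty\subset B^\infty\subset C^\infty$, and in particular making precise, via the usual reindexing/diagonal-sequence manoeuvre, the passage from a basic construction living inside one sequence algebra to a central sequence for the next algebra up.
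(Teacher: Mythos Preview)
Your treatment of (1) is correct. The forward direction coincides with the paper's. For the converse you take a genuinely different route: instead of the paper's direct argument (set $w=(\Index E)\,e_Pee_P$, compute $\hat E^\infty(e_P-w)e=e(e_P-w)e=0$, and combine faithfulness of $\hat E^\infty$ with injectivity of $x\mapsto xe$ to force $w=e_P$), you pass to the second basic construction $C=C^*\langle B,e_A\rangle$, establish $e_Pe_Ae_P=(\Index E)^{-1}e_P$ there by the faithfulness trick applied to $\hat{\hat E}$, and transport it back through the isomorphism of Remark~\ref{rmk:b-const}(5). Both arguments work; yours is more structural, while the paper's stays entirely inside $B^\infty$ and never needs $C$.

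Part (2), by contrast, is only a sketch, and your guiding principle fails there. The witness for approximate representability of $E$ lives in $P_\infty$, whereas a Rohlin projection for $\hat E$ lives in $B_\infty$; these cannot be the same projection. The paper handles this by an explicit back-and-forth: from $e\in P_\infty$ it builds
\[
f=\sum_i u_i\,e\,e_P\,v_i=(E^\infty)^{-1}(ee_P)\in B^\infty,
\]
verifies $f\in B_\infty$ and $\hat E^\infty(f)=(\Index E)^{-1}$, and checks injectivity of $b\mapsto bf$ on $B$ via the isomorphism $B\cong C^*\langle A,e\rangle$ and an auxiliary automorphism of $C^*\langle A,e_P,e\rangle$ swapping $e_P$ and $e$; conversely, from $f\in B_\infty$ it extracts $e=(\Index E)\hat E^\infty(fe_P)\in P^\infty$ using $fe_P\in e_PB^\infty e_P=P^\infty e_P$, and then verifies $e\in P_\infty$ and $eae=E(a)e$ by direct computation with \LL{lem:pd}. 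Your proposed reduction --- apply (1) to $\hat E$ and then pass between approximate representability of $E$ and of $\hat{\hat E}$ by ``compression by $e_P$'' --- does not avoid this work: unwinding that compression is exactly the construction $e\leftrightarrow f$ above, and it is the content of the proof rather than a quotable fact. Finally, no reindexing or diagonal-sequence manoeuvre is needed anywhere; the paper works with fixed elements of $A^\infty$ and $B^\infty$ throughout, so the difficulty you anticipate does not arise.
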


\begin{proof} (1):
Let $e_P$ be the Jones projection for the inclusion $A \supset P$. 
Suppose that $E$ has the Rohlin property with a Rohlin projection $e \in A_\infty$.
Then 
$$
e_Pee_P = E^\infty (e)e_P ={(\Index E)}^{-1}e_P \quad \text{in } \ B^\infty.
$$
Let $f$ be an element in $B^\infty$ defined by $f= (\Index E) ee_Pe$.
It is easy to see that $f$ is a projection and $f \leq e$.
Since $\hat{E}^\infty(e -f) = e - (\Index E) e\hat{E}(e_P)e = e-e =0$, we have 
$f = e $ by the faithfulness of $\hat{E}^\infty$, and hence $ee_Pe = {(\Index E)}^{-1}e$.
Since $\hat{E}$ is determined by $\hat{E}(xe_Py) = {(\Index E)}^{-1}xy$ for $x$ and  $y$ in $A$, 
we have for any element $z$ in $B$
$$
eze = \hat{E}(z)e, 
$$
and the map $A \ni x \mapsto xe \in Ae$ for $x \in A$ is injective by the definition of the Rohlin property.
Therefore $\hat{E}$ is approximately representable.

Conversely, suppose that $\hat{E}$ is approximately representable with a projection $e \in A_\infty$
satisfying that $eze = \hat{E}(z)e$ for any $z$ in $B$ and 
a map $A \ni x \mapsto xe \in Ae$ for $x \in A$ is injective.
Then we have 
$$
ee_Pe = \hat{E}(e_P)e = {(\Index E)}^{-1}e.
$$
Define an element $w$ in $B^\infty$ by $w = (\Index E) e_Pe e_P$.
Then 
$$
\hat{E}^\infty(e_P-w)e = e(e_P - w)e = ee_Pe - (\Index E) ee_P(ee_Pe) = 0.
$$
By the faithfulness of $\hat{E}^\infty$ and the injectivity of the map $x \mapsto xe$ for $x \in A$, 
we have 
$w=e_P$, i.e., $e_Pee_P = (\Index E)^{-1}e_P$.
Since $ E^\infty(e)e_P = e_Pee_P = (\Index E)^{-1}e_P$, 
we have $E^\infty(e) = {(\Index E)}^{-1}$. 
Hence 
$E$ has the Rohlin property.
\par
(2): Suppose that $E$ is approximately representable with a projection $e \in P_\infty$ 
satisfying that $exe =E(x)e$ for any $x$ in $A$. Let 
$\{(u_i, v_i)\}_{i=1}^n$ be a quasi-basis for $E$.
Define an element $f$ in $B^\infty$ by 
$$
f = \sum_{i=1}^nu_i e e_P v_i  \left(=(E^\infty)^{-1}(ee_P)\right) .
$$ 
It is easy to see that $f$ is a projection and commutes with elements of $A$ by Remark~\ref{rmk:quasi}~(4).
$f$ also commutes with $e_P$. In fact, since $ee_P = e_Pe$, we have
\begin{eqnarray*}
fe_P &=& \sum_{i=1}^n u_i ee_Pv_i e_P \\
 &=&   \sum_{i=1}^n u_i eE(v_i)e_P = \sum_{i=1}^n u_iE(v_i) ee_P = ee_P 
\end{eqnarray*}
and
\begin{eqnarray*}
e_Pf &=& \sum_{i=1}^n e_Pu_i ee_Pv_i \\
 &=& \sum_{i=1}^n e_Pu_i e_Pev_i = \sum_{i=1}^n ee_PE(u_i)v_i  = ee_P.
 \end{eqnarray*}
Therefore $f$ is an element in $B' \cap B^\infty = B_\infty$ since $B$ is generated by $A$ and $e_P$.
By Remark~\ref{rmk:b-const}~(5), there exists an isomorphim $\pi$ from $B$ onto $C^*\langle A, e\rangle$ such that
$\pi(e_P) = e$ and $\pi(a) = a$ for $a \in A$. So we have 
$$
(E^\infty)^{-1}(e) = \sum_{i=1}^n u_i \pi(e_P)v_i = \pi\left(\sum_{i=1}^n u_i e_P v_i\right) =  1
$$
and 
\begin{eqnarray*}
\hat{E}^\infty(f) &=& \sum_{i=1}^n u_i e \hat{E}(e_P)v_i \\
 &=& (\Index E)^{-1} (E^\infty)^{-1}(e) = (\Index E)^{-1}\cdot 1.
\end{eqnarray*}
Suppose that $xf= 0$ for some  $x\in B$.
Let $C^*\langle A, e_P, e\rangle$ be a $C^*$-algebra generated by $A$, $e_P$, and $e$ in $B^\infty$. 
Let $\varphi$ be an automorphism on $C^*\langle A, e_P, e\rangle$ defined by $\varphi(a) = a$ for $a \in A$, 
$\varphi(e_P) =e$, and $\varphi(e) = e_P$. 
Since $\varphi(f) = f$ and $\varphi(x) = \pi(x)$ for $ x \in B$, we have $0 = \varphi(xf) = \pi(x)f$.
Since $\pi(x) \in A^\infty$, we have $0=\hat{E}^\infty(\pi(x)f)= \pi(x)\hat{E}^\infty(f) = {(\Index E)}^{-1}\pi(x)$.
Hence $x =0$, i.e., the map $B \ni x \mapsto xf \in Bf$ is an injective map.
Therefore $\hat{E}$ has the Rohlin property with a projection $f \in B_\infty$.

Conversely, suppose that $\hat{E}$ has the Rohlin property with a Rohlin projection $f \in B_\infty$.
Define an element $e$ in $A^\infty$ by $ e = (\Index E) \hat{E}^\infty(fe_P)$.
Using the fact that $fe_P = e_P f = (\Index E) \hat{E}^\infty (fe_P)e_P$ by Lemma \ref{lem:pd}, 
we have
\begin{eqnarray*}
 e^2&=& (\Index E)^2\hat{E}^\infty (fe_P)\hat{E}^\infty (fe_P) \\
 &=& (\Index E) \hat{E}^\infty ((\Index E) \hat{E}^\infty (fe_P)e_Pf) \\
 &=&(\Index E) \hat{E}^\infty(fe_P) =e, 
\end{eqnarray*}
and hence $e$ is a projection in $A^\infty$.
There exists an element $x$ in $P^\infty$ such that $fe_P = xe_P$ since 
$$
fe_P = e_Pfe_P \in e_P B^\infty e_P = \left(e_PBe_P\right)^\infty =(Pe_P)^\infty= P^\infty e_P. 
$$
Then we have
$$
 e = (\Index E) \hat{E}^\infty(fe_P) = (\Index E) \hat{E}^\infty(xe_P) = (\Index E) x \hat{E}(e_P) =x.
$$
Hence $e \in P^\infty$ and $fe_P = ee_P$. 
Since $e$ commutes with any element in $P$, we have  $e \in P_\infty$.
For any $a \in A$
\begin{eqnarray*}
eae &= & (\Index E)^2\hat{E}^\infty(fe_P)a \hat{E}^\infty(fe_P)\\
 &=& (\Index E)^2\hat{E}^\infty(\hat{E}^\infty(fe_P)a fe_P)\\
 &=& (\Index E)\hat{E}^\infty((\Index E)\hat{E}^\infty(fe_Pa) e_Pf)\\
 &=& (\Index E)\hat{E}^\infty(fe_Pa e_Pf) \quad (\text{by Lemma~\ref{lem:pd}})\\
 &=& (\Index E)\hat{E}^\infty(fE(a) e_Pf)\\
 &=& E(a)(\Index E)\hat{E}^\infty(f e_P)=E(a)e.
\end{eqnarray*}
Suppose that $xe =0$ for some  $x \in P$.  Since  $ee_P = fe_p$, 
$xfe_P = (xe_P)f =0$ and $xe_P = 0$ by the injectivity of the map $B \ni y \mapsto yf \in Bf$. Hence 
$x =0$ by the injectivity of the map $P \ni x \mapsto xe_P \in Pe_P$. Therefore $E$  is approximately representable.
\end{proof}

\begin{prp}\label{prp:app}
Let $A \supset P$ be an inclusion of unital \ca s and 
$E$  a conditional expectation from  $A$ onto $P$ with a finite index.
If $E$ is approximately representable, then $P' \cap A \subset P$. 
\end{prp}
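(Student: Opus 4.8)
The plan is to establish the slightly stronger fact that $a=E(a)$ for every $a\in P'\cap A$; this immediately gives $P'\cap A\subset P$. Fix a projection $e\in P_\infty$ implementing approximate representability, so that $eye=E(y)e$ for all $y\in A$ and the map $P\ni p\mapsto pe$ is injective. Let $a\in P'\cap A$ and set $b=a-E(a)\in A$, so that $E(b)=0$.

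The first step is to observe that $e$ commutes with $a$, not merely with $P$. Since $e\in P_\infty=P^\infty\cap P'$, it has a representative $(e_n)_n$ with each $e_n\in P$; as $a$ commutes with every element of $P$, we get $ae_n=e_na$ for all $n$, hence $ae=ea$ in $A^\infty$. Consequently $ae=eae=E(a)e$ by the defining identity, so $be=0$. Applying that identity once more, this time to $y=b^*b\in A$, gives
$$
E(b^*b)\,e \;=\; e\,b^*b\,e \;=\; (be)^*(be) \;=\; 0 .
$$
Since $E(b^*b)\in P$, injectivity of $p\mapsto pe$ forces $E(b^*b)=0$; and because $\Index E<\infty$ the expectation $E$ is faithful (Remark~\ref{rmk:quasi}~(5)), so $b=0$, i.e.\ $a=E(a)\in P$.

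The only point requiring a little care is this first step: the injectivity built into the definition of approximate representability concerns only elements of $P$, so one cannot hope to conclude $b=0$ directly from $be=0$. The resolution is to transport the vanishing $be=0$ onto the $P$-valued quantity $E(b^*b)$ before invoking injectivity, and then to close the argument with faithfulness of $E$. Everything else consists of one-line computations inside $A^\infty$, using only that $e$ is a projection commuting with $P$ (and, by the first step, with $a$) together with the identity $eye=E(y)e$.
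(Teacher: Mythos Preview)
Your proof is correct, and it takes a genuinely different route from the paper's. Both arguments begin identically: from $e\in P_\infty\subset P^\infty$ one picks a representing sequence in $P$, so any $a\in P'\cap A$ commutes with $e$, whence $ae=eae=E(a)e$ and $be=0$ for $b=a-E(a)$. The divergence is in how one gets from $be=0$ to $b=0$.

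The paper invokes the $C^*$-basic construction: by Remark~\ref{rmk:b-const}(5), the injectivity of $p\mapsto pe$ on $P$ yields an isomorphism $C^*\langle A,e_P\rangle\cong C^*\langle A,e\rangle$ fixing $A$ and sending $e_P$ to $e$; transporting $ae=E(a)e$ through this isomorphism gives $ae_P=E(a)e_P$ in the basic construction, and there one concludes $a=E(a)$ directly (e.g.\ by applying $\hat E$). Your argument avoids the basic construction entirely: you push the vanishing $be=0$ onto the $P$-valued element $E(b^*b)$ via $E(b^*b)e=eb^*be=(be)^*(be)=0$, then use injectivity on $P$ and faithfulness of $E$ (Remark~\ref{rmk:quasi}(5)). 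This is more elementary and self-contained; the paper's version is more in keeping with the index-theoretic machinery used throughout, but requires the universality of the basic construction as an extra ingredient.
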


\begin{proof}
Let $e$ be a projection in $P_\infty$ such that $exe = E(x)e$ for $x$ in $A$.
If $x$ is an element in $P'\cap A$, then $x$ also commutes with $e$.
Hence $xe = exe = E(x)e$. 
Since a map $y \ni P \mapsto ye \in Pe$ is injective,    
there is an isomorphism $\pi$ from 
the basic construction $C^*\langle A, e_P\rangle$ onto $C^*\langle A, e\rangle$ by Remark~\ref{rmk:b-const}~(5).
Then 
\begin{eqnarray*}
xe_P &=& x\pi(e) = \pi(xe) = \pi(exe) \\
&=& \pi(E(x)e)= E(x)\pi(e) = E(x)e_P. 
\end{eqnarray*}
Therefore $x = E(x) \in P$.
\end{proof}

\begin{rmk}
If a conditional expectation $E\colon A \to P$ is approximately representable, 
then $E$ is the unique conditional expectation from $A$ onto $P$ by \cite[Corollary 1.4.3]{Watatani:index}.
In other words, the property that $E$ is approximately representable
is actually a property of the inclusion $A \supset P$. 
By Proposition \ref{prp:equality}, the property that $E$ has the Rohlin property
is actually a property of the inclusion $A \supset P$. 
So we call $A \supset P$ an inclusion with the Rohlin property.
\end{rmk}

When an inclusion $A \supset P$ has a finite index, if $P$ is simple, then $A$ is a finite direct sum of simple closed two-sided ideals  
by \cite[Theorem 3.3]{Izumi:inclusion}. 
Therefore the above propositions immediately implies the following:

\begin{cor}\label{cor:simple}
Let $A \supset P$ be an inclusion of unital \ca s and 
$E$  a conditional expectation from  $A$ onto $P$ with a finite index.
If $E$ is approximately representable and $P$ is simple, 
then $A \supset P$ is an irreducible inclusion, i.e., 
$P' \cap A \cong \C$ and $A$ is simple.
\end{cor}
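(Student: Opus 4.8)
The plan is to combine Proposition~\ref{prp:app} with the structural result of Izumi quoted just before the statement. First I would record the easy inclusions of relative commutants: since $P \subseteq A$, we have $Z(A) \subseteq P' \cap A$, and since $P$ is simple and unital, $Z(P) = \C 1$. By Proposition~\ref{prp:app}, approximate representability of $E$ gives $P' \cap A \subseteq P$; hence any $x \in P' \cap A$ lies in $P$ and commutes with all of $P$, so $P' \cap A \subseteq P' \cap P = Z(P) = \C 1$. As $\C 1 \subseteq P' \cap A$ trivially, this yields $P' \cap A = \C 1 \cong \C$, which is the irreducibility of the relative commutant.

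It then remains to deduce simplicity of $A$. Because $E$ has finite index and $P$ is simple, the cited theorem of Izumi (\cite[Theorem 3.3]{Izumi:inclusion}) provides a decomposition $A = \bigoplus_{k=1}^m A_k$ of $A$ into simple closed two-sided ideals. For each $k$ the central projection $z_k \in A$ that serves as the unit of $A_k$ lies in $Z(A) \subseteq P' \cap A = \C 1$, so $z_k \in \{0,1\}$. Since the $z_k$ are nonzero, mutually orthogonal, and sum to $1$, we must have $m = 1$, i.e.\ $A = A_1$ is simple.

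The only substantive input is Proposition~\ref{prp:app} itself, whose proof already appears above; once $P' \cap A \subseteq P$ is available, the rest is a short bookkeeping argument using simplicity of $P$ and the Izumi decomposition. For this reason I do not expect a genuine obstacle: the corollary is essentially a formal consequence of the two preceding results, and the argument could even be stated in a single line ($P' \cap A \subseteq Z(P) = \C 1$, hence $Z(A) = \C 1$, hence $A$ simple).
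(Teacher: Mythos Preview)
Your proposal is correct and follows essentially the same approach as the paper: both use Proposition~\ref{prp:app} to get $P'\cap A \subset P'\cap P = Z(P) = \C$, then conclude $Z(A)\subset P'\cap A \cong \C$ and invoke \cite[Theorem~3.3]{Izumi:inclusion} to obtain simplicity of $A$. Your version merely spells out the last step in terms of the central projections $z_k$, which is exactly the content of the paper's one-line appeal to Izumi's theorem.
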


\begin{proof}
By Proposition~\ref{prp:app}, we have $P' \cap A \subset P' \cap P \cong \C$ by the simplicity of $P$.
On the other hand, $A' \cap A \subset P' \cap A \cong \C$ and hence $A$ is simple by \cite[Theorem 3.3]{Izumi:inclusion}.
\end{proof}

\begin{cor}\label{cor:irreducible}
Let $E$ be a conditional expectation from  a unital \ca \ $A$ onto $P$ with a finite index.
If $E$ has the Rohlin property and $A$ is simple, then $A \supset P$ is an irreducible inclusion and $P$ is simple.
\end{cor}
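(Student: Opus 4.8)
The plan is to reduce the statement to Corollary~\ref{cor:simple} applied to the inclusion one level up in the basic construction tower. Write $B = C^{*}\langle A, e_{P}\rangle$ for the $C^{*}$-basic construction of $A \supset P$ and $\hat{E}\colon B \to A$ for the dual conditional expectation, which has finite index by Remark~\ref{rmk:b-const}~(4). Since $E$ has the Rohlin property, Proposition~\ref{prp:equality}~(1) gives that $\hat{E}$ is approximately representable.

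I would then apply Corollary~\ref{cor:simple} to the inclusion $B \supset A$ equipped with $\hat{E}$: here $A$ is the \emph{smaller} algebra, it is simple by hypothesis, and $\hat{E}$ is approximately representable by the previous step. This yields that $B \supset A$ is an irreducible inclusion, i.e., $A' \cap B \cong \C$ and $B$ is simple. The remaining work is to transport these two conclusions about $B \supset A$ back down to $A \supset P$.

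For the simplicity of $P$: by Remark~\ref{rmk:b-const}~(2) every element of $B$ is a finite sum $\sum_{i} x_{i} e_{P} y_{i}$ with $x_{i}, y_{i} \in A$, whence $e_{P} B e_{P} = P e_{P}$, and $p \mapsto p e_{P}$ is an injective $*$-homomorphism of $P$ onto this corner (injectivity by faithfulness of the conditional expectations). Thus $P$ is $*$-isomorphic to the nonzero hereditary subalgebra $e_{P} B e_{P}$ of the simple $C^{*}$-algebra $B$, and a nonzero hereditary subalgebra of a simple $C^{*}$-algebra is simple; so $P$ is simple. For the relative commutant $P' \cap A \cong \C$: by the relative-commutant calculus for the $C^{*}$-basic construction (\cite{Watatani:index}), $P' \cap A$ is anti-isomorphic to $A' \cap B$; concretely, $z \mapsto (\Index E)\,\hat{E}(z e_{P})$ takes values in $A$ by Lemma~\ref{lem:pd}, is injective, and carries $A' \cap B$ into $P' \cap A$, while a symmetric construction $y \mapsto \sum_{i} u_{i} e_{P} y v_{i}$ (for a quasi-basis $\{(u_{i}, v_{i})\}$ of $E$) goes back the other way. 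Since $A' \cap B \cong \C$, we conclude $P' \cap A \cong \C$, and together with the simplicity of $A$ this is precisely the irreducibility of $A \supset P$.

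The step I expect to be the main obstacle is the last one: identifying $P' \cap A$ with $A' \cap B$ relies on the relative-commutant theory of the $C^{*}$-basic construction, which lies outside the material developed in the excerpt. An alternative that stays closer to the available tools is to argue directly inside $B^{\infty}$ with the Rohlin projection $e \in A_{\infty}$ of $E$ (which satisfies $e e_{P} e = (\Index E)^{-1} e$ and $e z e = \hat{E}(z) e$ for all $z \in B$, by the proof of Proposition~\ref{prp:equality}~(1)); the delicate point there is that $e$ need not lie in $P_{\infty}$, so one cannot directly imitate the proof of Proposition~\ref{prp:app}.
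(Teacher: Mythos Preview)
Your proposal is correct and follows the same route as the paper: pass to $\hat{E}$ via Proposition~\ref{prp:equality}(1), apply Corollary~\ref{cor:simple} to the inclusion $B \supset A$, and then transport irreducibility and simplicity back down. The paper handles the two descent steps by citation---\cite[Proposition~3.11]{JOPT:can} for the linear-space isomorphism $P' \cap A \cong A' \cap B$ and \cite[Corollary~2.2.14]{Watatani:index} for the simplicity of $P$---so the relative-commutant identification you flagged as the main obstacle is precisely the point the paper itself outsources, and your hereditary-subalgebra argument for $P$ simple is the content of the cited Watatani result.
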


\begin{proof}
By Proposition \ref{prp:equality}, the dual conditional expectation $\hat{E}$ of $E$ is approximately representable.
Therefore the inclusion $C^*\langle A, e_P\rangle \supset A$ is irreducible and 
$C^*\langle A, e_P\rangle$ is simple by Corollary~\ref{cor:simple}.  Since $A' \cap C^*\langle A, e_P\rangle$ is isomorphic to 
$P' \cap A$ as linear spaces (see the proof of Proposition 3.11 of \cite{JOPT:can}), $A \supset P$ is irreducible.
The simplicity of $P$ comes from  \cite[Corollary 2.2.14]{Watatani:index}.
\end{proof}

%%%%%%%%%%%%%%%%%%%%%%%%%%%%%%%%%%%%%%%%%%%%%%%%%%%%%%%%%%%%%%%%%%%%

\section{Examples}\label{Exam}

In this section we shall construct examples of inclusions of $C^*$-algebras with the Rohlin property.

\begin{prp}
Let $Q$ be a unital simple $C^*$-algebra and $\alpha$ an action of a finite group $G$ on $Q$ with Rohlin projections 
$\{e_g\}_{g \in G}$. Let $H$ be a subgroup of $G$ and $P \subset A$ the inclusion of fixed point algebras $Q^G \subset Q^H$.
Then $P \subset A$ has the Rohlin property with a Rohlin projection $e_H = \sum_{h \in H} e_h$.
\end{prp}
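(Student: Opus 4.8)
The plan is to verify directly that $e_H = \sum_{h\in H} e_h$ serves as a Rohlin projection for the conditional expectation $E\colon Q^H \to Q^G$ in the sense of the definition: namely, that $e_H$ is a projection in $(Q^H)_\infty$, that $(E^\infty)(e_H) = (\Index E)^{-1}\cdot 1$, and that the map $Q^H \ni x \mapsto x e_H$ is injective. I should first record that, since $\alpha$ has the Rohlin property on the simple algebra $Q$, by \cite[Theorem 4.1 and Remark 4.6]{Jeong:saturated} the restriction $\alpha|_H$ is saturated as well, so $\Index E < \infty$ and $E$ is the canonical expectation $E(x) = \frac{1}{[G:H]}\sum_{gH} \alpha_g(x)$ up to the standard normalization; in particular $\Index E = [G:H]$.

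\textbf{Key steps.} First I would check that $e_H \in (Q^H)_\infty$: each $e_g$ lies in $Q_\infty = Q^\infty \cap Q'$, and the relation $(\alpha_h)_\infty(e_{h'}) = e_{hh'}$ for $h, h' \in H$ shows that the sum $e_H = \sum_{h\in H} e_h$ is fixed by $(\alpha_h)_\infty$ for every $h \in H$, hence $e_H \in (Q^H)^\infty$; combined with $e_H \in Q' \supset (Q^H)'$ we get $e_H \in (Q^H)_\infty$. Since the $\{e_g\}_{g\in G}$ are mutually orthogonal projections, $e_H$ is a projection. Second, I would compute $E^\infty(e_H)$: choosing a transversal $\{g_1,\dots,g_k\}$ for $G/H$ with $g_1 = 1$, the cosets $\{g_i H\}$ partition $G$, and $(\alpha_{g_i})_\infty(e_H) = \sum_{h\in H} e_{g_i h}$, so that $\sum_{i=1}^k (\alpha_{g_i})_\infty(e_H) = \sum_{g\in G} e_g = 1$; therefore $E^\infty(e_H) = \frac{1}{k}\sum_i (\alpha_{g_i})_\infty(e_H) = \frac{1}{k}\cdot 1 = (\Index E)^{-1}\cdot 1$. (One must note this is independent of the choice of transversal since $e_H$ is $(\alpha_h)_\infty$-invariant.) Third, for injectivity of $x \mapsto x e_H$ on $Q^H$: if $x \in Q^H$ with $x e_H = 0$, then applying $(\alpha_{g_i})_\infty$ gives $x\, (\alpha_{g_i})_\infty(e_H) = 0$ for all $i$ (using that $x \in Q^H \subset Q^{g_i}$... careful: $x$ need not be fixed by $g_i$, but $(\alpha_{g_i})_\infty$ is an automorphism of $Q^\infty$ and $(\alpha_{g_i})_\infty(x)$ lies in $Q^\infty$; since $x \in Q$ and $Q$ is simple the cleanest route is to sum: $0 = \sum_i (\alpha_{g_i})_\infty(x e_H)$ would require $x$ fixed). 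The robust argument: $Q$ is simple, so the map $Q^H \ni x \mapsto x e \in Q^\infty e$ is automatically injective for any nonzero projection $e$, because $x e = 0$ forces $E_Q(x^*x)\cdot(\text{something}) $... — more simply, since $e_H \neq 0$ and $x^* x \ge 0$ commutes with $e_H$, $x^* x e_H = 0$ implies $x^*x$ is not invertible, but by simplicity of $Q$ and a standard argument (as used in the proof that the Rohlin property for $\alpha$ implies $E$ has the Rohlin property) the map is injective. I would simply cite the simplicity of $Q$ as in the proof of the earlier proposition relating $\alpha$ and $E$ having the Rohlin property.

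\textbf{Main obstacle.} The main subtlety is the injectivity of $x \mapsto x e_H$ and the bookkeeping with coset representatives: one must be sure that $E^\infty(e_H)$ is genuinely the scalar $[G:H]^{-1}\cdot 1$ and not merely an element with that image under some state, which is why the $(\alpha_h)_\infty$-invariance of $e_H$ is essential — it guarantees $\sum_{g\in G} \alpha_g(e_H) = [G:H]\sum_{gH} (\alpha_{gH})_\infty(e_H) = [G:H]\cdot 1$, so dividing by $\#G$ gives $E^\infty(e_H) = [G:H]^{-1}$. The rest is routine once the identification $\Index E = [G:H]$ and the saturation of $\alpha|_H$ (hence finiteness of $\Index E$) are in place.
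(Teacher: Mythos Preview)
Your argument is correct and follows essentially the same route as the paper's: verify that $e_H$ lies in $A_\infty = (Q^H)' \cap (Q^H)^\infty$, compute $F^\infty(e_H) = (\Index F)^{-1}$ for the expectation $F\colon Q^H \to Q^G$, and (implicitly) use simplicity of $Q$ for injectivity of $x \mapsto xe_H$.

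Two places where the paper is tidier than your draft. First, for the index: rather than routing through saturation of $\alpha|_H$ (which, as you state it, gives finite index for $Q \supset Q^H$, not directly for $Q^H \supset Q^G$), the paper simply factors the canonical expectation $E\colon Q \to Q^G$ as $F \circ E_H$ with $E_H\colon Q \to Q^H$, and invokes multiplicativity of the Watatani index \cite[Proposition~1.7.1]{Watatani:index} to get $\Index F = |G|/|H|$ from $\Index E = |G|$ and $\Index E_H = |H|$. This also makes the computation $F^\infty(e_H) = E^\infty(e_H) = |H|/|G|$ a one-liner, since $E^\infty(e_g) = |G|^{-1}$ for each $g$; your coset-sum computation is equivalent but requires the extra remark on well-definedness. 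Second, the paper does not discuss injectivity at all, leaving it implicit from simplicity of $Q$ (exactly as in the proof that $\alpha$ having the Rohlin property implies $E$ does): since $e_H \in Q_\infty$ is nonzero and $Q$ is simple, the ideal $\{x \in Q : xe_H = 0\}$ is zero, so a fortiori the map is injective on $Q^H$. Your eventual conclusion ``cite the simplicity of $Q$'' is the right one; the intermediate attempts (applying $(\alpha_{g_i})_\infty$, invoking $E_Q(x^*x)$, invertibility) are unnecessary detours and can be deleted.
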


\begin{proof}
Let $E$ be the canonical conditional expectation from $Q$ onto $P$ defined by 
$$
E(x) = \frac{1}{|G|}\sum_{g\in G} \alpha_g(x), \qquad x \in Q.
$$
Since $E(e_g) = \dfrac{1}{|G|}$ for any $g \in G$, we have
$$
E(e_H) = \sum_{h \in H} E(e_h) = \frac{|H|}{|G|}.
$$
Let $E_H$ be the conditional expectation from $Q$ onto $A$ defined by 
$$
E_H(x) = \frac{1}{|H|}\sum_{h\in H} \alpha_h(x), \qquad x \in Q.
$$
It is easy to see that $E_H^\infty (e_H) = e_H \in A^\infty$ since $\alpha^\infty_g(e_h) = \alpha^\infty(e_{gh})$.
By the definition of Rohlin projections we have $e_g \in  Q' \cap Q^\infty \subset A'\cap Q^\infty$ and  hence $e_H \in A' \cap A^\infty$.
Let $F$ be the conditional expectation from $A$ onto $P$ defined by $F(x) = E(x)$ for $x$ in $A$.
Since $\Index E = |G|$ and $\Index E_H = |H|$ and $E = F \circ E_H$,   $\Index F = \dfrac{|G|}{|H|}$ by the multiplicity of the index
(\cite[Proposition 1.7.1]{Watatani:index}).
We have $F(e_H) = \dfrac{1}{\Index F}$ and the inclusion $A \supset P$ has  the Rohlin property.
\end{proof}

\begin{rmk}
With the above notation if $H$ is not a normal subgroup of $G$, then it is easy to see that 
the depth of $A \supset P$ is not two (cf. \ \cite[Teorem 4.6]{Te:normal}).
In particular, $P$ is not the fixed point algebra $A^\alpha$ for any finite group action $\alpha$ with the Rohlin property.
\end{rmk}

%%%%%%%%%%%%%%%%%%%%%%%%%%%%%%%%%%%%%%%%%%%%%%%%%%%%%%%%%%%%%%%%%%%%%%%%%%%%%%%%%%%%%%%%%%%%%%%%%%%%%%%%%%%%%%%%%%%%%%%%%%%%%%

\section{Inclusions of $C^*$-algebras with the Rohlin property}\label{Rohlin for inclusion}

In \cite{OP:Rohlin}, Osaka and Phillips proved that crossed products by finite group actions with the 
Rohlin property preserve various properties of $C^*$-algebras.
In this section, we extend their result and prove that inclusions with the Rohlin property preserve various properties of 
$C^*$-algebras.

\begin{lem}\label{lem:isoD-A}
Let $A \supset P$ be an inclusion of unital \ca s and 
$E$  a conditional expectation from $A$ onto $P$ with a finite index.
If $E$ has the Rohlin property with a Rohlin projection $e \in A_\infty$, then for any $x \in A^\infty$ there exists 
the unique element $y$ of $P^\infty$ such that $xe = ye$. 
\end{lem}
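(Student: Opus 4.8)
The statement resembles Lemma~\ref{lem:pd} but with $e$ in place of $e_P$ and an element $x \in A^\infty$ rather than $z \in C^*\langle A, e_P\rangle$; the key fact I would exploit is that the Rohlin projection $e$ behaves much like a Jones projection after passing to the basic construction $B$ and its dual expectation $\hat E$. Concretely, by the proof of Proposition~\ref{prp:equality}~(1), if $E$ has the Rohlin property with Rohlin projection $e \in A_\infty$, then $\hat E$ is approximately representable via the same projection $e$, so that $eze = \hat E(z)e$ for all $z \in B$ and, in particular, $e_P e e_P = (\Index E)^{-1} e_P$ and $e e_P e = (\Index E)^{-1} e$. I would first upgrade this identity to the level of $B^\infty$, which is routine since all the defining relations are preserved under the induced maps.

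**Existence.** Given $x \in A^\infty$, set $y = (\Index E)\,\hat E^\infty(e_P x e)$, or more symmetrically one can try $y = (\Index E)\,\hat E^\infty(x e e_P)$; I would check that this $y$ lies in $P^\infty$ and satisfies $xe = ye$. The point is that $x e = (\Index E)\, x e e_P e = (\Index E)\, \hat E^\infty(\text{something}) e$ using $e e_P e = (\Index E)^{-1} e$ and the module property of $\hat E^\infty$ over $A^\infty$; the factor $e_P$ absorbs $x \in A^\infty$ into the range of $\hat E^\infty$, which is $A^\infty$, and then a second application of the relations pushes the result into $P^\infty$. Indeed $e_P x e_P = E^\infty(x) e_P \in P^\infty e_P$, so $e_P x e \in P^\infty e$-ish expressions appear naturally; the cleanest route is to imitate verbatim the computation in the second half of the proof of Proposition~\ref{prp:equality}~(2), where from $f e_P = e_P f \in (P e_P)^\infty = P^\infty e_P$ one extracts an element of $P^\infty$. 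So I would write $e_P(xe)e_P$, observe it equals $E^\infty(x)\,e e_P$ up to the commutation $e e_P = e_P e$, and read off $y = E^\infty(x) \in P^\infty$ with $xe = ye$.

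**Uniqueness.** This is immediate from the injectivity built into the definition of the Rohlin property: if $y_1, y_2 \in P^\infty$ both satisfy $y_i e = xe$, then $(y_1 - y_2)e = 0$ with $y_1 - y_2 \in P^\infty$; the map $P \ni a \mapsto ae$ being injective does not quite suffice at the level of $P^\infty$, so I would instead argue via the basic construction — apply $\hat E^\infty$ after multiplying by $e_P$, using $e e_P e = (\Index E)^{-1}e$, to conclude $(y_1 - y_2) = (\Index E)\hat E^\infty((y_1-y_2)e e_P) = (\Index E)\hat E^\infty(0 \cdot e_P) = 0$. Alternatively, and perhaps more transparently, once existence shows $y = E^\infty(x)$ is forced by the computation $e_P x e_P = y e_P$, uniqueness follows because $P^\infty \ni y \mapsto y e_P \in P^\infty e_P$ is injective (it has a one-sided inverse given by $\hat E^\infty(\cdot e_P)$ times $\Index E$).

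**Main obstacle.** The only genuinely delicate point is handling the sequence-algebra $A^\infty$ versus $A_\infty$: the injectivity hypothesis in the Rohlin property is stated for $A \ni x \mapsto xe$, and one must be careful that the argument for uniqueness does not secretly require injectivity of $A^\infty \ni x \mapsto xe$, which is false in general. Routing everything through $\hat E^\infty$ and the identity $e e_P e = (\Index E)^{-1} e$ sidesteps this, since $\hat E^\infty$ is faithful with finite index; so I expect the proof to be short, with the bookkeeping of which algebra each element lives in being the one thing to get right.
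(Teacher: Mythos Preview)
Your overall strategy---pass to the basic construction, use $e e_P e = (\Index E)^{-1} e$, and mimic earlier computations---matches the paper's. But the execution has two genuine errors.

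\textbf{Existence.} You invoke the commutation $e e_P = e_P e$, but this is precisely what you do \emph{not} have in the Rohlin setting: the projection $e$ lies in $A_\infty$, not $P_\infty$, so it commutes with $A$ but has no reason to commute with $e_P$. (The identity $e e_P = e_P e$ appears in the proof of Proposition~\ref{prp:equality}(2) only in the approximately-representable case, where $e \in P_\infty$.) As a result your conclusion $y = E^\infty(x)$ is false: take $x = e$ itself, so $xe = e$, whereas $E^\infty(e)\,e = (\Index E)^{-1} e \ne e$. The paper's computation instead inserts $e = (\Index E)\, e e_P e$ on the right and uses $e_P (xe) e_P = E^\infty(xe)\, e_P$ to obtain
\[
xe \;=\; (\Index E)^2\,\hat E^\infty\bigl(E^\infty(xe)\, e_P e\bigr) \;=\; (\Index E)\, E^\infty(xe)\, e,
\]
so the correct element is $y = (\Index E)\, E^\infty(xe)$, not $E^\infty(x)$. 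Your initial guesses $y = (\Index E)\hat E^\infty(e_P x e)$ and $y = (\Index E)\hat E^\infty(xe e_P)$ both just return $xe$, since $\hat E^\infty$ is an $A^\infty$-bimodule map.

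\textbf{Uniqueness.} Your computation $(y_1 - y_2) = (\Index E)\,\hat E^\infty\bigl((y_1-y_2)\, e e_P\bigr)$ is off by a factor of $e$: since $y_1 - y_2$ and $e$ are in $A^\infty$, the right-hand side equals $(y_1-y_2)e$, not $y_1-y_2$, so you have not escaped the circularity. The paper's argument is much simpler and avoids $\hat E$ entirely: apply $E^\infty$ to $(y_1-y_2)e = 0$ and use the defining relation $E^\infty(e) = (\Index E)^{-1}\cdot 1$ together with $P^\infty$-linearity of $E^\infty$ to get $(\Index E)^{-1}(y_1-y_2) = 0$. This is exactly the step that sidesteps the issue you flagged about injectivity of $x \mapsto xe$ at the $P^\infty$ level.
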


\begin{proof}
Let $e_P$  be the Jones projection for the inclusion $A \supset P$.
By the proof of Proposition \ref{prp:tunnel}, we have $ee_Pe = (\Index E)^{-1}e$. 
Therefore for any element $x$ in $A^\infty$
\begin{eqnarray*}
 xe&=& (\Index E)\hat{E}^\infty(e_P xe )\\
 &=& (\Index E)^2\hat{E}^\infty(e_P x e e_P e)\\
 &= &(\Index E)^2\hat{E}^\infty(E^\infty(xe)e_Pe) = (\Index E) E^\infty(xe)e,
\end{eqnarray*}
where $\hat{E}$ is the dual conditional expectation for $E$. Put $y = (\Index E) E^\infty(xe) \in P^\infty$.
 Then we have
$x e = y e$. 

Suppose that $ye = ze$ for $y, z \in P^\infty$. 
Then 
$$
z = (\Index E) E^\infty(e)z = (\Index E) E^\infty(ez)=(\Index E) E^\infty(ey) = y.
$$ 
Therefore we obtain the uniqueness of $y$.
\end{proof}

\begin{rmk}\label{rmk:D}
Let $D$ be the $C^*$-subalgebra of $P^\infty$ defined by $D = \{e\}' \cap P^\infty$.
By Proposition \ref{prp:tunnel}, we have $eP^\infty e = De$.
Moreover, $eA^\infty e = De$ by the above lemma. 
\end{rmk}

\begin{thm}\label{thm:AF}
Let $A \supset P$ be an inclusion of separable unital \ca s and 
$E$  a conditional expectation from $A$ onto $P$ with a finite index.
If $A$ is an AF algebra and $E$ has the Rohlin property, then $P$ is also an AF algebra.
\end{thm}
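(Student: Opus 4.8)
The plan is to use the local characterization of AF algebras (Bratteli): since $P$ is separable, it suffices to show that for every finite subset $F\subseteq P$ and every $\varepsilon>0$ there is a finite dimensional $C^*$-subalgebra $B\subseteq P$ with $\operatorname{dist}(x,B)<\varepsilon$ for all $x\in F$. The heart of the matter will be to use the Rohlin projection to construct a unital injective $*$-homomorphism $\iota\colon A\to P^\infty$ which restricts to the identity on $P$; once this is available, the AF property passes from $A$ down to $P$ by routine sequence-algebra arguments.

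So let $e\in A_\infty$ be a Rohlin projection for $E$, i.e.\ $E^\infty(e)=(\Index E)^{-1}\cdot 1$ and the map $A\ni a\mapsto ae$ is injective. By Lemma~\ref{lem:isoD-A}, for each $a\in A$ there is a unique $\iota(a)\in P^\infty$ with $ae=\iota(a)e$. I would then verify that $\iota$ is a unital injective $*$-homomorphism with $\iota|_P=\mathrm{id}_P$: injectivity is immediate from $ae=\iota(a)e$ and the injectivity of $a\mapsto ae$; one has $\iota(p)=p$ for $p\in P$ since $p$ is itself a solution of $pe=ye$; multiplicativity uses that $e$, lying in $A_\infty=A'\cap A^\infty$, commutes with $A$, so $(\iota(a)-a)e=0$ gives $(\iota(a)-a)be=(\iota(a)-a)eb=0$ and hence $\iota(a)\iota(b)e=\iota(a)be=abe=\iota(ab)e$, the uniqueness in Lemma~\ref{lem:isoD-A} then forcing $\iota(a)\iota(b)=\iota(ab)$; finally, $ae=eae$ lies in $eA^\infty e=De$ with $D=\{e\}'\cap P^\infty$ by Remark~\ref{rmk:D}, so $\iota(a)\in D$ commutes with $e$, giving $\iota(a)^*e=e\iota(a)^*=(\iota(a)e)^*=(ae)^*=a^*e$ and hence $\iota(a)^*=\iota(a^*)$ by uniqueness again. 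Thus $\iota(A)\subseteq P^\infty$ is a copy of the AF algebra $A$ that contains $P$.

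Now fix a finite $F\subseteq P$ and $\varepsilon>0$. As $A$ is AF, choose a finite dimensional $A_0\subseteq A$ with $\operatorname{dist}(x,A_0)<\varepsilon/3$ for all $x\in F$; then $\iota(A_0)$ is a finite dimensional $C^*$-subalgebra of $P^\infty$, and since $\iota$ is contractive and fixes $P$ we get $\operatorname{dist}(x,\iota(A_0))<\varepsilon/3$ for all $x\in F$. Writing $P^\infty=l^\infty(\N,P)/c_0(P)$ and invoking semiprojectivity of finite dimensional $C^*$-algebras (concretely, perturbing approximate systems of matrix units in $P$ to exact ones), the inclusion $\iota(A_0)\hookrightarrow P^\infty$ lifts, for all $m$ past some index, to $*$-homomorphisms $\psi_m\colon\iota(A_0)\to P$ with $[(\psi_m(r))_m]=r$ in $P^\infty$ for every $r\in\iota(A_0)$. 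Since the norm on $P^\infty$ is the $\limsup$ of the coordinate norms, for $m$ large the finite dimensional subalgebra $B:=\psi_m(\iota(A_0))\subseteq P$ satisfies $\operatorname{dist}(x,B)<\varepsilon$ for all $x\in F$, and we conclude that $P$ is AF. The one substantive step is the construction of $\iota$ — precisely where the Rohlin hypothesis is used, through Lemma~\ref{lem:isoD-A} and Remark~\ref{rmk:D}; the descent from $P^\infty$ back to $P$ is standard, although it is where some care with the $\limsup$-norm is required.
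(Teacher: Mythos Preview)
Your proof is correct and follows the same strategy as the paper: use Lemma~\ref{lem:isoD-A} and Remark~\ref{rmk:D} to push a finite dimensional approximant from $A$ into $P^\infty$, then descend to $P$ via semiprojectivity of finite dimensional $C^*$-algebras. The paper's proof of Theorem~\ref{thm:AF} does this by hand at the level of matrix units (invoking Glimm's perturbation lemma), whereas your packaging via the unital embedding $\iota\colon A\to P^\infty$ fixing $P$ is exactly the map the paper later calls $\beta$ in its proof of the more general Theorem~\ref{thm:local C}; so your argument is really the paper's own second proof, specialised back to the AF case.
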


\begin{proof}
We shall prove that for every finite set $S \subset P$ and every $\varepsilon>0$, there is a finite dimensional 
$C^*$-subalgebra $Q$ of $P$ such that every element of $S$ is within $\varepsilon$ of an element of $Q$.
Since $A$ is an AF algebra and $S \subset P \subset A$, there is a finite dimensional 
$C^*$-subalgebra $R$ of $A$ such that every element of $S$ is within $\frac{\varepsilon}{4}$ of an element of $R$.
Let $\{e_{ij}^{(r)}\}$ be a system of matrix units of $R \cong \bigoplus {\mathbb M}_{n_r}$. 
Since $e$ commutes with each element of $A$ and a map $A \ni x \to xe \in Ae$ is injective, 
$\{e_{ij}^{(r)}e\}$ is also a system of matrix units of type $R$. 
Since $Ae = eAe \subset eA^\infty e$, , there are elements $p_{ij}^{(r)}$ of 
$D = \{e\}' \cap P^\infty$ such that 
$e_{ij}^{(r)}e = p_{ij}^{(r)}e$ by  Lemma \ref{lem:isoD-A} and Remark \ref{rmk:D}.
By the uniqueness of $p_{ij}^{(k)}$, $\{p_{ij}^{(r)}\}$ is a system of matrix units of type $R$.
For every $(i,j,r)$, let $(p_{ijk}^{(r)})_{k=1}^\infty$ be a sequence of $P$ such that  
$p_{ij}^{(r)} = (p_{ijk}^{(r)})_{k=1}^\infty + c_0(A)$. 
For every $x \in S$, let $x^R = \sum_{i,j,r}x_{ij}^{(r)}e_{ij}^{(r)} \in R, \ x_{ij}^{(r)}\in \C$ such that 
$\Vert x -x^R \Vert <\frac{\varepsilon}{4}$.
Let $x^\infty$ be the element of $P^\infty$ defined by $x^\infty= \sum_{i,j,r}x_{ij}^{(r)}p_{ij}^{(r)}$.
For each $k \in \N$, we define $x^\infty_k$ as 
the element of $P$  by $x^\infty_k= \sum_{i,j,r}x_{ij}^{(r)}p_{ijk}^{(r)}$. 
Since $\Vert x -x^R \Vert = \Vert xe - x^Re \Vert = \Vert xe - x^\infty e \Vert
= \Vert x - x^\infty \Vert$ by the injectivity of a map $D \ni x \to xe \in D e$, 
we have 
$\Vert x - x^\infty \Vert <\frac{\varepsilon}{4}$ and hence 
$$
\limsup_{k \to \infty}  \Vert x - x^\infty_k \Vert <\frac{\varepsilon}{4}.
$$
Since $\{p_{ij}^{(r)}\}$ is a system of matrix units of type $R$,
we have  
$$
\limsup_{k \to \infty}\Vert p_{ijk}^{(r)} p_{m n k}^{(s)} - \delta_{r s}\delta_{jm}p_{ink}^{(r)}\Vert = 0.
$$
Choose $\delta>0$ according to \cite[Lemma 1.10]{Glimm:certain}
for $R$ and for $\varepsilon/(2\dim(R))$.
There is 
$k_0 \in \N$ such that $\{p_{ijk_0}^{(r)}\}$ is a set of approximate matrix units of type $R$ within $\delta$ in $P$
and $\Vert x - x^{\infty}_{k_0}\Vert < \frac{\varepsilon}{2}$ for every $x$ in $S$.
Then there is a set $\{f_{ij}^{(r)}\}$ of exact matrix units of type $R$ in $P$ with 
 $\Vert f_{ij}^{(r)}-p_{ijk_0}^{(r)}\Vert < \varepsilon/(2\dim(R))$.
 Put $x_0 = \sum_{i,j,r}x_{ij}^{(r)}f_{ij}^{(r)} \in P$. Since $\Vert x^\infty_{k_0}-x_0 \Vert < \frac{\varepsilon}{2}$, 
we have
\begin{eqnarray*}
 \Vert x-x_0\Vert &\leq & \Vert x-x^\infty_{k_0} \Vert+ \Vert x^\infty_{k_0} - x_0\Vert   \\
 &\leq & \frac{\varepsilon}{2} +\frac{\varepsilon}{2}= \varepsilon.
\end{eqnarray*}
So we can choose a finite dimensional $C^*$-algebra $Q$ generated by $\{f^{(r)}_{ij}\} \subset P$.
Therefore $P$ is an AF algebra.
\end{proof}

Following \cite{OP:Rohlin}, we introduce several notations to describe   
the local approximate characterizations of the classes of direct limit algebras constructed using 
some common families of semiprojective building blocks.

\begin{dfn}
Let $\mathcal{C}$ be a class of separable unital $C^*$-algebaras. Then $\mathcal{C}$ is 
{\it finitely saturated} if the following closure conditions hold:
\begin{enumerate}
 \item If $A \in \mathcal{C}$ and $B\cong A$, then $B \in \mathcal{C}$.
 \item If $A_1, A_2, \dots , A_n \in \mathcal{C}$ then $\bigoplus _{k=1}^n A_k \in \mathcal{C}$.
 \item If $A \in \mathcal{C}$ and $n \in \N$, then $M_n(A) \in \mathcal{C}$.
 \item If $A \in \mathcal{C}$ and $p \in A$ is a nonzero projection, then $pAp \in \mathcal{C}$.
\end{enumerate}
Moreover, the {\it finite saturation} of a class $\mathcal{C}$ is the smallest finitely saturated class which contains 
$\mathcal{C}$.
\end{dfn}

\begin{dfn}
Let $\mathcal{C}$ be a class of separable unital $C^*$-algebras. We say that 
$\mathcal{C}$ is {\it flexible} if :
\begin{enumerate}
 \item For every $A \in \mathcal{C}$, every $n \in \N$, and every nonzero projection $p \in M_n(A)$, 
 the corner $pM_n(A)p$ is semiprojective in the sense of Definition 14.1.3 of \cite{Loring:lifting}, 
 and is finitely generated.
 \item For every $A \in \mathcal{C}$ and every ideal $I \subset A$, there is an increasing sequence 
 $I_0 \subset I_1 \subset \cdots$ of ideals in $A$ such that $\overline{\cup_{n=0}^\infty I_n} =I$, 
 and such that for every $n$ the $C^*$-algebra $A/I_n$ is in the finite saturation of $\mathcal{C}$.
\end{enumerate}
\end{dfn}

\begin{dfn}
Let $\mathcal{C}$ be a class of separable unital $C^*$-algebras. A {\it unital local $\mathcal{C}$-algebra} 
is a separable unital $C^*$-algebra such that for every finite set $S \subset A$ and every $\varepsilon >0$, 
there are a $C^*$-algebra $B$ in the finite saturation of $\mathcal{C}$ and a unital homomorphism 
$\varphi: B \to A$ such that ${\rm dist}(a, \varphi(B)) < \varepsilon$ for all $a \in S$.
\end{dfn}

\begin{thm}\label{thm:local C}
Let $\mathcal{C}$ be any flexible class of separable unital $C^*$-algebras.
Let $A \supset P$ be a finite index inclusion with the Rohlin property. 
If $A$ is a unital local $\mathcal{C}$-algebra, then $P$ is also a unital local $\mathcal{C}$-algebra. 
\end{thm}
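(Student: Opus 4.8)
The plan is to mimic the proof of Theorem \ref{thm:AF}, replacing the finite-dimensional building block $R$ by a building block $B$ in the finite saturation of $\mathcal{C}$ and using semiprojectivity to carry an approximate copy of $B$ found inside $P^\infty$ down to an honest copy inside $P$. First I would fix a finite set $S \subset P$ and $\varepsilon > 0$. Since $A$ is a unital local $\mathcal{C}$-algebra and $S \subset P \subset A$, there are a $C^*$-algebra $B$ in the finite saturation of $\mathcal{C}$ and a unital homomorphism $\varphi \colon B \to A$ with $\mathrm{dist}(a, \varphi(B)) < \varepsilon/4$ for all $a \in S$. Let $e \in A_\infty$ be a Rohlin projection for $E$ and recall from Remark \ref{rmk:D} that, writing $D = \{e\}' \cap P^\infty$, we have $eA^\infty e = De$ and the map $D \ni x \mapsto xe \in De$ is injective (by Lemma \ref{lem:isoD-A} together with the injectivity of $x \mapsto xe$ on $A$). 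Composing $\varphi$ with the cut-down $a \mapsto ae$ (which is an injective homomorphism on $A$ since $e$ is central and $x \mapsto xe$ is injective) and then using Lemma \ref{lem:isoD-A} to replace each $\varphi(b)e$ by the unique $\psi_0(b)e$ with $\psi_0(b) \in D \subset P^\infty$, I obtain a homomorphism $\psi_0 \colon B \to P^\infty$; it is a genuine homomorphism by the uniqueness clause in Lemma \ref{lem:isoD-A}, and it is unital because $\varphi$ is and $e$ is a unit for $eA^\infty e$.

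Next I would lift $\psi_0$ to a homomorphism into $P$ itself. Writing $\psi_0 = \pi_\infty \circ \Psi$ where $\Psi \colon B \to \ell^\infty(\mathbb{N}, P)$ is some (not necessarily multiplicative) lift and $\pi_\infty$ is the quotient map, the components $\Psi_k \colon B \to P$ form an asymptotically multiplicative sequence of contractive linear maps that is asymptotically unital. Since $B$ lies in the finite saturation of $\mathcal{C}$, it is semiprojective and finitely generated (this uses condition (1) in the definition of a flexible class, applied to $B = pM_n(A')p$ for $A' \in \mathcal{C}$, together with the routine fact that semiprojectivity and finite generation pass through finite saturation — direct sums, matrix amplifications, and corners). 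Semiprojectivity then lets me perturb $\Psi_{k_0}$, for $k_0$ large, to an honest unital homomorphism $\psi \colon B \to P$ with $\|\psi(b) - \Psi_{k_0}(b)\|$ small on the (finite) generating set, hence small on a fixed finite set of elements of $B$ that $\varepsilon/4$-approximate the preimages of $S$ under $\varphi$.

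Finally I would assemble the estimate. For $x \in S$ pick $b_x \in B$ with $\|\varphi(b_x) - x\| < \varepsilon/4$; tracking this through the cut-down by $e$ and the injective isometry $D \ni y \mapsto ye$, we get $\|\psi_0(b_x) - x\| < \varepsilon/4$ in $P^\infty$, hence $\limsup_k \|\Psi_k(b_x) - x\| \le \varepsilon/4$. Choosing $k_0$ large enough that both $\|\Psi_{k_0}(b_x) - x\| < \varepsilon/3$ for all $x \in S$ and $\Psi_{k_0}$ is close enough to multiplicative for the semiprojectivity perturbation to produce $\psi$ with $\|\psi(b_x) - \Psi_{k_0}(b_x)\| < \varepsilon/3$, we conclude $\mathrm{dist}(x, \psi(B)) < \varepsilon$ for all $x \in S$, with $\psi(B) \subset P$ a homomorphic image of a $C^*$-algebra in the finite saturation of $\mathcal{C}$. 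Since $S$ and $\varepsilon$ were arbitrary, $P$ is a unital local $\mathcal{C}$-algebra. The main obstacle I anticipate is the bookkeeping around semiprojectivity: one must be careful that $B$ genuinely lies in the class for which condition (1) guarantees semiprojectivity and finite generation (this is why $\mathcal{C}$ flexible rather than merely finitely saturated is needed), and that the standard "asymptotically multiplicative maps lift along semiprojective algebras" argument is invoked with the correct finite set of generators and tolerance; everything else is a routine repackaging of the AF proof via Lemma \ref{lem:isoD-A} and Remark \ref{rmk:D}.
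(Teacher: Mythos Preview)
Your proposal is correct and follows essentially the same route as the paper: compose the given $\varphi\colon B\to A$ with the injective unital homomorphism $\beta\colon A\to D\subset P^\infty$ furnished by Lemma~\ref{lem:isoD-A}, then use semiprojectivity of $B$ to descend from $P^\infty$ to $P$. The only cosmetic difference is that the paper invokes semiprojectivity in its definitional form---lifting the map $B\to P^\infty=\ell^\infty(\N,P)/c_0(P)$ through the intermediate quotient $\ell^\infty(\N,P)/I_n$, where $I_n$ is the ideal of sequences vanishing past index $n$---which yields honest homomorphisms $\varphi_k\colon B\to P$ for all $k>n$ directly and so sidesteps the separate ``perturb an approximate homomorphism'' step you describe.
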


\begin{proof}
We shall prove that for every finite set $S \subset P$ and every $\varepsilon>0$, there are a 
$C^*$-algebra $Q$ in the finite saturation of $\mathcal{C}$ and a unital homomorphism 
$\varphi:Q \to P$ such that $S$ is within $\varepsilon$ of an element of $\varphi(Q)$.

Since $A$ is a unital local $\mathcal{C}$-algebra, for finite set $S \subset P \subset A$ and 
$\frac{\varepsilon}{2}>0$, there is a 
$C^*$-algebra $Q$ in the finite saturation of $\mathcal{C}$ and a unital homomorphism 
$\rho:Q \to A$ such that $S$ is within $\frac{\varepsilon}{2}$ of an element of $\rho(Q)$.
As in the proof of Theorem \ref{thm:AF}, $Ae = eAe \subset eA^\infty e = De$, where $e$ is a Rohlin projection 
for the inclusion $A \supset P$ and $D = \{e\}' \cap P^\infty$. 
By Lemma \ref{lem:isoD-A}, 
we can define a map $\beta:A \to D$ such that $ae = \beta(a)e$ for $a \in A$. 
It is easy to see that $\beta$ is a unital injective homomorphism and $\beta(x) = x$ for 
$x \in P$.
So we can define a unital homomorphism 
$\varphi^\infty : Q \to P^\infty$ by $\varphi^\infty (q) = \beta(\rho(q))$.
Since $\Vert x - a \Vert = \Vert \beta(x) - \beta(a)\Vert = \Vert x - \beta(a)\Vert$ for $x \in S$ and $a \in A$, 
we know that $S$ is within $\frac{\varepsilon}{2}$ of an element of $\varphi^\infty(Q)$.

For $n \in \N$, let $I_n$ be an ideal of $l^\infty(\N, P)$ defined by 
$$
I_n = \{(a_k)_{k=1}^\infty \in l^\infty(\N, P) : a_k = 0 \ \text{for} \ k >n \} .
$$
Then $\{I_n\}$ is an increasing chain of ideals in $l^\infty(\N, P)$ and 
$c_0(P) = \overline{\bigcup I_n}$.
Since $\mathcal{C}$ is flexible, $Q$ has the semiprojectivity. By the definition of the semiprojectivity 
(Definition 14.1.3 in \cite{Loring:lifting}), there exist $n \in \N$ and $\bar{\varphi}$ so that 
the diagram
\begin{eqnarray*}
 & & l^\infty(\N, P)/ I_n\\
 & \overset{\bar{\varphi}}{\nearrow} & \qquad \downarrow \\
 Q&\quad \overset{\varphi^\infty}\longrightarrow & \quad P^\infty
\end{eqnarray*}
commutes.  
For each $k \in \N$, let $\varphi_k$ be a map from $Q$ to $P$ so that $\bar{\varphi}(q) = (\varphi_k(q))_{k=1}^\infty + I_n$
for $q \in Q$.
By the above commutative diagram, we have $\varphi^\infty(q) = (\varphi_k(q))_{k=1}^\infty + c_0(P)$ and 
$\varphi_k$ is a homomorphism for  $k >n$.
For $x \in S$, we can choose $q_x \in Q$  such that 
$\Vert x -\varphi^\infty(q_x) \Vert < \frac{\varepsilon}{2}$. Then we have 
$$
\limsup_{k \to \infty}\Vert x - \varphi_k(q_x) \Vert < \frac{\varepsilon}{2}. 
$$ 
Since $S$ is a finite set, there exists $k_0 > n$ such that 
$\Vert x - \varphi_{k_0}(q_x) \Vert < \varepsilon$ for every $x$ in $S$.
Therefore $P$ is  a unital local $\mathcal{C}$-algebra.
\end{proof}

We have the following result.

\begin{cor}
Let $A \supset P$ be an inclusion of separable unital $C^*$-algebras with the Rohlin property.
\begin{enumerate}
 \item If $A$ is a unital AI algebra, as defined in Example 2.2 in \cite{OP:Rohlin}, then $P$ is a unital AI algebra.
 \item If $A$ is a unital AT algebra, as defined in Example 2.3 in \cite{OP:Rohlin}, then $P$ is a unital AT algebra.
 \item If $A$ is a unital AD algebra, as defined in Example 2.4 in \cite{OP:Rohlin}, then $P$ is a unital AD algebra. 
 \item If $A$ is a unital countable direct limit of one dimensional noncommutative CW complexes (Definition 2.5 in \cite{OP:Rohlin}), 
 then so is $P$.
\end{enumerate}
\end{cor}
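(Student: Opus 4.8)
The plan is to deduce all four statements directly from Theorem~\ref{thm:local C}. For each class appearing in the corollary one exhibits a flexible class $\mathcal{C}$ of separable unital $C^*$-algebras such that the class in question is exactly the class of unital local $\mathcal{C}$-algebras; this is the point of view of \cite{OP:Rohlin}, so most of the verification is already recorded there. Concretely: for (1) take $\mathcal{C}$ to be the finite saturation of $\{C([0,1])\}$; for (2) the finite saturation of $\{C(\mathbb{T})\}$, where $\mathbb{T}$ is the circle; for (3) the finite saturation of the class of dimension-drop intervals; and for (4) the class of one dimensional noncommutative CW complexes. In each case the two conditions defining flexibility must be checked: (i) every corner $pM_n(B)p$ of a building block $B$ is finitely generated and semiprojective in the sense of Definition 14.1.3 of \cite{Loring:lifting}; (ii) every ideal of such a $B$ is the norm closure of an increasing union of ideals whose successive quotients lie in the finite saturation of $\mathcal{C}$. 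Both are carried out in Examples 2.2--2.5 of \cite{OP:Rohlin}: (i) is classical for intervals, circles, dimension-drop algebras, one dimensional noncommutative CW complexes and their corners, while (ii) uses the description of the primitive ideal space of $M_n(C(X))$ as $X$ (and the analogous description for one dimensional noncommutative CW complexes) together with an approximation of an open set from inside by open sets whose complements are subcomplexes, so that the associated quotients remain of the prescribed form.

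Granting flexibility, I would next recall that a separable unital $C^*$-algebra is a countable inductive limit, with unital connecting maps, of algebras in $\mathcal{C}$ if and only if it is a unital local $\mathcal{C}$-algebra: one implication is immediate, and for the other the semiprojectivity in (i) lets one convert the local approximations into an honest inductive system, exactly as in \cite{OP:Rohlin}. Thus each of the classes ``unital AI algebra'', ``unital AT algebra'', ``unital AD algebra'', and ``unital countable direct limit of one dimensional noncommutative CW complexes'' is the class of unital local $\mathcal{C}$-algebras for the corresponding choice of $\mathcal{C}$.

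Finally, if $A \supset P$ is a finite index inclusion with the Rohlin property and $A$ belongs to one of these classes, then $A$ is a unital local $\mathcal{C}$-algebra, so by Theorem~\ref{thm:local C} $P$ is a unital local $\mathcal{C}$-algebra, hence lies in the same class. The only real content beyond bookkeeping is the verification of condition (ii) of flexibility, where one must choose the exhausting chain of ideals carefully so that all the quotients stay inside the finite saturation of $\mathcal{C}$; since Theorem~\ref{thm:local C} already absorbs the semiprojectivity argument together with the use of the Rohlin projection, nothing further about the inclusion itself is needed here.
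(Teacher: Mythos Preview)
Your proposal is correct and follows exactly the approach in the paper: invoke \cite{OP:Rohlin} to see that each of the four building-block classes is flexible and that membership in the corresponding direct-limit class coincides with being a unital local $\mathcal{C}$-algebra, then apply Theorem~\ref{thm:local C}. The paper's own proof compresses all of this into a single sentence citing \cite{OP:Rohlin}, but the content is the same.
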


\begin{proof}
Since the relevant classes are flexible by \cite{OP:Rohlin}, we may apply Theorem \ref{thm:local C}.
\end{proof}

The notion of topological stable rank for a $C^*$-algebra $A$, denoted by tsr($A$), was 
introduced by Rieffel, which generalized the concept of the dimension of a topological space 
\cite{Rf1}.  A unital $C^*$-algebra $A$ has topological stable rank one if 
the set of invertible elements of $A$  is dense in $A$.
We have the following result.

\begin{thm}\label{thm:str}
Let $A \supset P$ be an inclusion of  unital $C^*$-algebras with the Rohlin property.
If ${\rm tsr}(A) = 1$, then ${\rm tsr}(P) = 1$.
\end{thm}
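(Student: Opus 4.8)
The plan is to transfer the density of invertibles in $A$ to $P$ by working inside $P^\infty$ via the Rohlin projection, exactly in the spirit of the proofs of Theorem~\ref{thm:AF} and Theorem~\ref{thm:local C}. Let $e\in A_\infty$ be a Rohlin projection for $E$, set $D=\{e\}'\cap P^\infty$, and recall from Remark~\ref{rmk:D} that $eA^\infty e=eP^\infty e=De$, and from Lemma~\ref{lem:isoD-A} that multiplication by $e$ gives a unital injective homomorphism $\beta\colon A\to D$ with $\beta(x)=x$ for $x\in P$ and $\|\beta(a)\|=\|ae\|=\|a\|$ for all $a\in A$ (since $e$ commutes with $A$ and $x\mapsto xe$ is injective on $A$; more generally $\beta$ is isometric on $A^\infty$ by the uniqueness clause of Lemma~\ref{lem:isoD-A}). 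First I would fix a finite set $S\subset P$ and $\varepsilon>0$ and an element $b\in P$ (a single element suffices: if every $b\in P$ is approximable by invertibles we are done), and use $\mathrm{tsr}(A)=1$ to choose an invertible $a\in A$ with $\|b-a\|<\varepsilon/3$. Then $\beta(a)\in D\subset P^\infty$ is invertible in $\beta(A)$, hence invertible in $P^\infty$, and $\|b-\beta(a)\|=\|\beta(b)-\beta(a)\|=\|b-a\|<\varepsilon/3$.

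Next I would descend from $P^\infty$ to $P$. Write $\beta(a)=(a_k)_{k=1}^\infty+c_0(P)$ and $\beta(a)^{-1}=(c_k)_{k=1}^\infty+c_0(P)$ with $a_k,c_k\in P$. Then $(a_kc_k-1)_k$ and $(c_ka_k-1)_k$ lie in $c_0(P)$, so $\limsup_k\|a_kc_k-1\|=\limsup_k\|c_ka_k-1\|=0$; for all large $k$ both $a_kc_k$ and $c_ka_k$ are within $1/2$ of $1$, hence invertible in $P$, which forces $a_k$ to be invertible in $P$. Also $\limsup_k\|b-a_k\|=\|b-\beta(a)\|<\varepsilon/3<\varepsilon$. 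Choosing one such large $k$ produces an invertible element $a_k\in P$ with $\|b-a_k\|<\varepsilon$. Since $b\in P$ and $\varepsilon>0$ were arbitrary, the invertibles are dense in $P$, i.e.\ $\mathrm{tsr}(P)=1$.

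The two points that need a little care — and where I expect the only real friction — are (i) verifying that $\beta$ is genuinely isometric on all of $A^\infty$ (not just on $A$), which is what lets $\|b-\beta(a)\|=\|b-a\|$; this follows because $\beta(a)e=ae$ and, by the uniqueness statement in Lemma~\ref{lem:isoD-A}, the correspondence $xe\leftrightarrow x$ between $eA^\infty e$ and $D$ (equivalently $x\mapsto xe$ on $P^\infty$) is a bijective $*$-homomorphism onto its range, hence isometric; and (ii) the lifting step, i.e.\ that invertibility in $P^\infty$ forces invertibility of almost all coordinate representatives in $P$ — this is the elementary Neumann-series argument just sketched and uses only that a self-adjoint-free perturbation of $1$ by something of norm $<1$ is invertible in a unital Banach algebra. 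Neither semiprojectivity nor any structure theory is needed here, so this is genuinely softer than the AF or local-$\mathcal{C}$ theorems; the whole content is that $\beta$ embeds $A$ unitally and isometrically into $P^\infty$ as a subalgebra of $P^\infty$ restricting to the identity on $P$, after which density of invertibles passes along automatically.
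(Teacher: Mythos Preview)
Your proposal is correct and follows essentially the same route as the paper: use the unital injective $*$-homomorphism $\beta\colon A\to D\subset P^\infty$ from the proof of Theorem~\ref{thm:local C}, push an invertible approximant $a\in A$ to $\beta(a)\in P^\infty$, and then lift invertibility to a coordinate $a_k\in P$. Your write-up is in fact more explicit than the paper's at the lifting step (the Neumann-series argument), which the paper leaves as a one-line assertion; your concern (i) about isometry on all of $A^\infty$ is unnecessary, since $b,a\in A$ and an injective $*$-homomorphism of $C^*$-algebras is automatically isometric on $A$.
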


\begin{proof}
We shall prove that for every element $x$ of $P$ and every $\varepsilon>0$, there is an invertible element $y$ of $P$ such that 
$\Vert x-y \Vert < \varepsilon$.

So fix $x \in P$ and $\varepsilon>0$. Since ${\rm tsr}(A)=1$ and $x \in P \subset A$, there is an invertible element $a \in A$ 
such that $\Vert x-a \Vert < \frac{\varepsilon}{2}$.
Let $\beta : A \to D = \{e\}' \cap P^\infty$ be the injective homomorphism defined in the proof of Theorem \ref{thm:local C}, where 
$e$ is the Rohlin projection in $A_\infty$.
Since $\beta(x) = x$, we have $\Vert x-\beta(a) \Vert < \frac{\varepsilon}{2}$.
Let $(a_n)$ be a sequence of elements in $P$ so that $\beta(a) = (a_n) + c_0(P)$. 
Then we have 
$$
\limsup_{n \to \infty}\Vert x-a_n \Vert < \frac{\varepsilon}{2}.
$$
Since $a$ is invertible element in $A$, so is $\beta(a) = (a_n) + c_0(P)$ in $P^\infty$.
Therefore there is $k \in \N$ such that $a_k$ is an invertible element in $P$ and 
$\Vert x-a_k \Vert < \varepsilon$.
\end{proof}

The theory of real rank for $C^*$-algebra, developed by Brown and Pedersen \cite{BP91}, formally resembles the theory of 
topological stable 
rank, but there are important differences under the surface. 
On the other hand, the real rank zero property is one of the most significant properties that a 
$C^*$-algebra can have. 
A unital $C^*$-algebra $A$ has real rank zero if the set of invertible self-adjoint elements of $A$ 
is dense in the set of self-adjoint elements of $A$.
We have, then,  the following theorem. Since its proof is very similar to 
the proof of Theorem \ref{thm:str}, we omit it.

\begin{thm}
Let $A \supset P$ be an inclusion of  unital $C^*$-algebras with the Rohlin property.
If $A$ has real rank zero, then P has real rank zero.
\end{thm}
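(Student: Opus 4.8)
The plan is to mirror the proof of Theorem~\ref{thm:str} almost verbatim, replacing "invertible element" by "invertible self-adjoint element" and "element" by "self-adjoint element" throughout. We want to show that for every self-adjoint $x \in P$ and every $\varepsilon > 0$, there is an invertible self-adjoint $y \in P$ with $\Vert x - y \Vert < \varepsilon$.

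First I would fix a self-adjoint $x \in P$ and $\varepsilon > 0$. Since $P \subset A$ and $A$ has real rank zero, there is an invertible self-adjoint $a \in A$ with $\Vert x - a \Vert < \frac{\varepsilon}{2}$. Let $\beta : A \to D = \{e\}' \cap P^\infty$ be the injective unital homomorphism from the proof of Theorem~\ref{thm:local C}, where $e \in A_\infty$ is the Rohlin projection; since $\beta$ is a $*$-homomorphism, $\beta(a)$ is self-adjoint, and since $\beta(x) = x$ we get $\Vert x - \beta(a) \Vert < \frac{\varepsilon}{2}$. Choosing a representing sequence $(a_n)$ in $P$ with $\beta(a) = (a_n) + c_0(P)$, we may assume each $a_n$ is self-adjoint (replace $a_n$ by $\frac{1}{2}(a_n + a_n^*)$, which does not change the class in $P^\infty$ since $\beta(a)$ is self-adjoint), and then $\limsup_{n \to \infty} \Vert x - a_n \Vert < \frac{\varepsilon}{2}$. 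Finally, since $\beta$ is a unital homomorphism and $a$ is invertible in $A$, $\beta(a)$ is invertible in $P^\infty$, so the self-adjoint element $(a_n) + c_0(P)$ is invertible in $P^\infty$; hence there is $k$ with $a_k$ invertible self-adjoint in $P$ and $\Vert x - a_k \Vert < \varepsilon$.

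There is no real obstacle here; the only point requiring a word of care is that invertibility of a self-adjoint element in $P^\infty = l^\infty(\N,P)/c_0(P)$ passes to invertibility of some $a_k$ in $P$. This follows because the spectrum of $(a_n) + c_0(P)$ not containing $0$ forces the existence of $\delta > 0$ with $\Vert a_n^2 \Vert \geq \delta$ eventually... more directly, $0 \notin \mathrm{sp}((a_n)+c_0(P))$ gives a lower bound $\vert a_n \vert \geq \delta$ for all large $n$ after possibly modifying on a finite set, which makes $a_k$ invertible in $P$ for large $k$. This is exactly the same elementary fact used implicitly in Theorem~\ref{thm:str}, so we may suppress it. Accordingly, the proof is a routine adaptation and we omit the details, as indicated in the statement.
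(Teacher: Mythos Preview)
Your proposal is correct and is exactly the adaptation the paper has in mind: the paper explicitly omits the proof, stating it is ``very similar to the proof of Theorem~\ref{thm:str},'' and your argument carries out precisely that modification (self-adjoint invertible approximants, the homomorphism $\beta$, and lifting invertibility from $P^\infty$ to some $a_k$). The extra care you take---replacing $a_n$ by $\tfrac{1}{2}(a_n+a_n^*)$ to ensure self-adjointness of the lifts---is the only new wrinkle over Theorem~\ref{thm:str}, and it is handled correctly.
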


\noindent {\bf Acknowledgement} {The authors would like to thank the referee for his useful comments and constructive suggestion.}


\begin{thebibliography}{99}



%\bibitem{Blackadar:OA} B.\ Blackadar, 
%{\it Operator Algebras},
%{\bf 122}, Encyclopaedia of Mathematical Sciences. Springer-Verlag, Berlin, 2006.
%Operator Algebras and Non-commutative Geometry 3.

\bibitem{BCM:crossed} R.\ J.\ Blattner, M.\ Cohen and S.\ Montgomery,
{\it Crossed products and inner actions of Hopf algebras},
Trans.\ Amer.\ Math.\ Soc.\,
{\bf 298}
(1986),
p.\ 671--711.

\bibitem{BP91} L.\ G.\ Brown and G.\ K.\ Pedersen, 
{\it$C^*$-algebras of real rank zero}, 
J.\ Funct.\ Anal. \textbf{99}(1991), p.\ 131--149.

\bibitem{HJ:82} R.\ H.\ Herman and V.\ F.\ R.\ Jones, 
{\it Period two automorphisms of UHF $C^*$-algebras}, 
J.\ Funct.\ Anal.\ \textbf{45}(1982), p.\ 169--176.

\bibitem{HJ:83} R.\ H.\ Herman and V.\ F.\ R.\ Jones, 
{\it Models of finite group actions}, 
Math.\ Scand.\ \textbf{52}(1983), p.\ 312--320.

\bibitem{Glimm:certain} J. G. Glimm, 
{\it On a certain class of operator algebras}, 
Trans. Amer. Math. Soc. \textbf{95} (1960), p.\ 318--340. 


\bibitem{Izumi:inclusion} M.\ Izumi,
{\it Inclusions of simple $C^*$-algebras},
J.\ reine angew.\ Math.\,
{\bf 547}
(2002),
p.\ 97--138.

\bibitem{Izumi:Rohlin1} M.\ Izumi, 
{\it Finite group actions on $C^*$-algebras with the Rohlin property--I},
Duke Math.\ J.\,
{\bf 122}(2004), p.\ 233--280. 

\bibitem{JOPT:can} J.~A.~Jeong, H.~Osaka, N.~C.~Phillips and T.~Teruya,
{\it Cancellation for inclusions of $C^*$-algebras of finite depth}, 
to appear in Indiana U. Math J.
arXiv:0704.3645v1 [math.OA].

\bibitem{Jeong:saturated}  J.\ A.\ Jeong and G.\ H.\ Park, 
{\it Saturated actions by finite dimensional Hopf *-algebras on $C^*$-algebras}
Intern.\ J.\ Math
{\bf 19}(2008), p.\ 125--144. 

\bibitem{Jones:index} J.\ F.\ R.\ Jones, 
{\it Index for subfactors},
Inventiones Math.\ 
{\bf 72}(1983), p.\ 1--25



\bibitem{Kishimoto:77} A.\ Kishimoto, 
{\it On the fixed point algebra of UHF algebra under a periodic automorphism of product type }, 
Publ.\ Res.\ Inst.\ Math.\ Sci.\ \textbf{13}(1997/1998), p.\ 777--791.

\bibitem{Kishimoto:98a} A.\ Kishimoto, 
{\it Automorphisms of {\rm A}$\mathbb{T}$ algebras with Rohlin property}, 
J.\ Operator Theory \textbf{40}(1998) p.\ 277--294.

\bibitem{Kishimoto:98b} A.\ Kishimoto, 
{\it Unbouded derivations in AT algebras}, 
J.\ Funct.\ Anal.\ \textbf{160}(1998), p.\ 270--311.

\bibitem{Nkamura:00} H.\ Nakamura, 
{\it Aperiodic automorphisms of nuclear purely infinite simple $C^*$-algebras}, 
Ergodic Theory Dynam.\ Systems \textbf{20}(2000), p.\ 1749--1765.


\bibitem{Loring:lifting} T.\ A.\ Loring, 
{\it Lifting Solutions to Perturbing Problems in $C^*$-algebras}, 
Fields Institute Monographs no.\ 8, 
American Mathematical Society, Providence RI, 1997.


\bibitem{OP:Rohlin}H.\ Osaka and N.\ C.\ Phillips, 
{\it Crossed products by finite group actions with the Rokhlin property}, 
arXiv:math.OA/0704.3651.

\bibitem{Phillips:tracial} N.\ C.\ Phillips,
{\it The tracial Rokhlin property for  actions of finite groups on $C^*$-algebras}
arXiv:math.OA/0609782.

%\bibitem{P-P:entropy} M.\ Pimsner and S.\ Popa,
%{\it Entropy and index for subfactors}, 
%Ann.\ Sci.\ Ecole Norm.\ Sup.\ (Paris), Ser. 4, 
%{\textbf{19}}(1986),
%p.\ 57--109.


\bibitem{Rf1} M.~A.\ Rieffel,
{\it Dimension and stable rank in the K-theory of C*-algebras}, Proc. London Math. Soc.
{\textbf{46}}(1983), 
p.\ 301--333.

\bibitem{Sweedler:hopf} M.\ E.\ Sweedler,
{\it Hopf algebras},
Benjamin, New York, 1969.

%\bibitem{Szymanski:subfactor} W.\ Szyma\'nski,
%{\it Finite index subfactors and Hopf algebra crossed products},
%Proc.\ Amer.\ Math.\ Soc.\,
%{\bf 120}
%(1994),
%p.\ 519--528.


\bibitem{SP:saturated} W.\ Szyma\'nski and C.\ Peligrad,
{\it Saturated actions of finite dimensional Hopf {\rm *}-algebras on  
$C^*$-algebras},
Math.\ Scand.\,
{\bf 75}
(1994),
p.\ 217--239.

\bibitem{Te:normal} T. \ Teruya, 
{\it normal intermediate subfactors},
J. \ Math. \ Soc. \ Japan 
{\bf 50}(1998), no2, p.\ 469--490.

\bibitem{Watatani:index} Y.\ Watatani,
{\it Index for $C^*$-subalgebras},
Mem.\ Amer.\ Math.\ Soc.\,
{\bf 424},
Amer.\ Math.\ Soc., \ Providence, R.\ I.,
(1990).

\end{thebibliography}
\end{document}